\documentclass[11pt,letterpaper,reqno]{amsart}

\usepackage{amsmath,amssymb,amsthm,mathtools}
\usepackage{bbm}
\usepackage{enumitem}
\usepackage{esint}
\usepackage{tikz}

\usepackage[T1]{fontenc}
\usepackage[utf8]{inputenc}
\usepackage[hidelinks]{hyperref}

\theoremstyle{plain}
\newtheorem{theorem}{Theorem}

\newtheorem{lemma}[theorem]{Lemma}

\theoremstyle{definition}
\newtheorem{remark}[theorem]{Remark}

\numberwithin{equation}{section}

\renewcommand{\leq}{\leqslant}
\renewcommand{\geq}{\geqslant}
\renewcommand{\le}{\leqslant}
\renewcommand{\ge}{\geqslant}

\newcommand{\R}{\mathbb{R}}
\newcommand{\Z}{\mathbb{Z}}
\newcommand{\N}{\mathbb{N}}
\newcommand{\1}{\mathbbm{1}}
\newcommand{\abs}[1]{\lvert #1\rvert}

\newcommand{\supp}{\operatorname{supp}}
\newcommand{\dd}{\,\textup{d}}
\newcommand{\Fone}{\mathcal{F}_{1}}
\newcommand{\Ftwo}{\mathcal{F}_{2}}
\newcommand{\gridn}{\{0,1,\dots,n-1\}}

\begin{document}

\title[On hyperbolic corners and unit-area triangles]{On hyperbolic corners and unit-area triangles in planar sets of large measure}

\author[A. Bulj]{Aleksandar Bulj}
\email{aleksandar.bulj@math.hr}

\author[V. Kova\v{c}]{Vjekoslav Kova\v{c}}
\email{vjekovac@math.hr}

\address{Department of Mathematics, Faculty of Science, University of Zagreb, Bijeni\v{c}ka cesta 30, 10000 Zagreb, Croatia}

\subjclass[2020]{
Primary 28A75; %Measure and integration - Length, area, volume, other geometric measure theory
Secondary 05D10, %Combinatorics - Ramsey theory
42B20} %Harmonic analysis on Euclidean spaces - Singular and oscillatory integrals

\begin{abstract}
For large $R$, we consider measurable sets $A\subseteq [0,R]^2$ that avoid triples of points of the form $(x,y)$, $(x+t,y)$, $(x,y+1/t)$ with $x,y\in\mathbb{R}$ and $t>0$, i.e., the vertices of upward-oriented, axis-aligned right triangles of area $1/2$. We prove that the measures of such sets satisfy $|A|= O_c(R^2/(\log R)^c)$ for any constant $c<1/4$. An ingredient in the proof is a hyperbolic variant of the two-dimensional trilinear smoothing inequality by Christ, Durcik, and Roos. The aforementioned upper bound is complemented with an example of a set of measure $\Omega(R\log R)$ avoiding the same point configuration. 

Next, we study measurable sets $A\subseteq [0,R]^2$ that avoid triples of points spanning a triangle of a given fixed area and establish a sharpening of the aforementioned upper bound to any $c<1/2$. This makes partial progress on a question by Erd\H{o}s, who conjectured an upper bound $O(1)$, and improves over a quantitatively weak $o(R^2)$ result by Graham. The latter proof additionally uses induction on scales to interchangeably control the density and the Riesz energy of the set $A$.
\end{abstract}

\maketitle

\tableofcontents

%%%%%

\section{Introduction}

We refer to Subsection~\ref{subsec:notation} for the quite standard asymptotic notation that we use in this paper.

%%%%%

\subsection{Sets with no hyperbolic corners}
The first main result of the present paper deals with bounds on measures of subsets $A$ of a large square $[0,R]^2$ that avoid a certain three-point configuration, which we call a \emph{hyperbolic corner}.

\begin{theorem}\label{thm:main}
Let $M(R)$ be the supremum of $|A|$ over measurable sets $A \subseteq [0,R]^2$ containing no triples of the form
\begin{equation}\label{eq:mainpattern}
(x,y),\ (x+t,y),\ (x, y+t^{-1}) 
\end{equation}
with $x,y\in\mathbb{R}$ and $t>0$. Then for $R\geq 10$ we have
\begin{equation}\label{eq:mainestimate}
R\log R \lesssim M(R) \lesssim R^2 \Bigl(\frac{\log\log R}{\log R}\Bigr)^{1/4}. 
\end{equation}
\end{theorem}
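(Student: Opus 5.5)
The theorem has two halves, and I would treat them separately: an explicit construction for the lower bound and an analytic counting argument for the upper bound.

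\emph{Lower bound.} I would look for a set built from a union of thin strips. A natural first candidate is the region under a hyperbola. Suppose $(x,y),(x+t,y),(x,y+1/t)$ all lie in a set of the form $\{(x,y): x,y\ge 1,\ xy\le 1\}$-like region, rescaled. That region has area only $O(\log R)$ inside $[0,R]^2$, so a single such piece is too small. I would instead stack many disjoint translates, where each piece is designed so that no pattern can use points from two different pieces. The key observation is that the pattern has $t\cdot(1/t)=1$. If a region has horizontal width $w(y)$ at height $y$ and the vertical gaps satisfy $w\cdot h<1$, it avoids patterns. So I would take $A=\{(x,y)\in[0,R]^2 : x\in[0,R],\ y\in E\}$ intersected with a hyperbolic shape.

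A cleaner attempt is $A=\{(x,y): 1\le x\le R,\ 0\le y< 1/x\}$ repeated vertically, but this needs care. Concretely, I would try
\[
A=\{(x,y)\in[0,R]^2:\ \lfloor y\rfloor \text{ even},\ \{y\}\cdot x<1/2\}\ \text{-type sets},
\]
and verify two things. First, horizontal pairs in $A$ at height $y$ differ by $t<x_{\max}(y)$. Second, the vertical step $1/t>1/x_{\max}(y)$ then exits the strip. Each strip contributes area $\int_1^R \min(1,1/x)\,dx\approx\log R$, and about $R$ strips give $R\log R$. The detailed verification of avoidance is routine once the shape is fixed.

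\emph{Upper bound.} I would count patterns with a smooth weight. Set $f=\1_A$, of density $\delta=|A|/R^2$, and consider the form
\[
\Lambda(f)=\int\!\!\int\!\!\int f(x,y)f(x+t,y)f(x,y+t^{-1})\,\psi(t)\,\frac{dt}{t}\,dx\,dy,
\]
with $\psi$ supported on $t\in[2^{-k},2^{k}]$, restricted to scales compatible with $R$. The main term is obtained by replacing the third $f$ with a smoothed version at scale comparable to $1/t$ averages. I would prove a lower bound $\gtrsim \delta^3 R^2\log(\cdot)$ via Cauchy–Schwarz and Hölder in the $x,t$ variables. The error terms come from the high-frequency part of $f$ in the vertical direction at scale $1/t$. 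These are controlled by the hyperbolic analogue of the Christ–Durcik–Roos trilinear smoothing inequality: a power gain $2^{-\sigma j}$ when one function has frequency $\sim 2^j$ relative to the natural scale. Summing over the many scales, I would use a pigeonhole argument over dyadic scales $t\sim 2^{i}$, $|i|\lesssim \log R$. An energy-increment-free bound suffices: the error at scale $i$ is bounded by a square function piece, and square function orthogonality gives $\sum_i \mathrm{err}_i\lesssim R^2$. Hence some range of $\sim\log R/\log\log R$ scales has an error smaller than the main term unless $\delta^{4}\lesssim \log\log R/\log R$. Avoidance means $\Lambda=0$, which yields $\delta\lesssim(\log\log R/\log R)^{1/4}$.

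\emph{Main obstacle.} The hardest step is the hyperbolic smoothing inequality itself. Because $t$ and $1/t$ are coupled, it cannot be reduced to the standard corner case by a linear change of variables. I would first try a local change of variables $t=2^{i}(1+s)$, $s\in[0,1]$, which makes the configuration a curved corner $(x+2^i(1+s),y)$, $(x,y+2^{-i}(1+s)^{-1})$. I would then apply the Christ–Durcik–Roos method, using their nondegenerate-curvature hypotheses, uniformly in $i$ after anisotropic rescaling $x\mapsto 2^i x$, $y\mapsto 2^{-i}y$. This rescaling is area-preserving, which is what makes the scales line up.
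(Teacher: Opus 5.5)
\emph{Lower bound: the construction fails.} Your set $A=\{(x,y)\in[0,R]^2:\lfloor y\rfloor\text{ even},\ \{y\}x<1/2\}$ contains $(0,0),(1/2,0),(0,2)$, which is the pattern with $t=1/2$. Letting $x,y\in(0,0.1)$ and $t\in(0.4,0.5)$ vary gives a positive-measure family of such triples, so no modification on a null set repairs it. A single strip already fails: $(0,0.1),(4,0.1),(0,0.35)$ is the pattern with $t=4$. The building block itself, the region under one hyperbola, contains $(0,0),(1,0),(0,1)$.

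Your heuristic ``$t<x_{\max}(y)$ forces $1/t>1/x_{\max}(y)$, which exits the strip'' fails in both regimes. Near the bottom of a strip $x_{\max}(y)$ is huge, so $1/t$ can be tiny and stay inside the strip. A large $1/t$ simply lands in a higher strip, because you repeat the strips vertically.

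The missing idea is a geometry in which both displacements push one linear coordinate in the same direction, so each piece only has to be controlled against forward jumps. The paper uses the antidiagonal bands $S_j=\{(x,y)\in[0,R]^2: R-4j\le x+y\le R-4j+1/(8j)\}$, $1\le j\le\lfloor R/4\rfloor$. Both $(x+t,y)$ and $(x,y+1/t)$ increase $x+y$, by $t$ and $1/t$ respectively. Starting from $S_j$, a jump that stays in $S_j$ is at most $1/(8j)$, and a jump into some $S_k$ with $k<j$ lies in $(3,4j)$. Each of the three possible combinations then contradicts $t\cdot t^{-1}=1$. Since $|S_j|=(R-4j)/(8j)+O(j^{-2})$, the union has measure $\frac18R\log R+O(R)$.

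\emph{Upper bound: right outline, but the main term needs repair.} Your structure matches the paper's: the structured/error/uniform split, the hyperbolic Christ--Durcik--Roos inequality transported to every scale by the area-preserving map $(x,y)\mapsto(\lambda x,y/\lambda)$, and a count over the $\sim\log R$ scales in $[1/R,R]$. The problem is that if only the third function is smoothed, the single-scale main term can vanish for dense sets. For example, $A=\{(x,y)\in[0,R]^2: x\bmod 4\lambda\in[0,\lambda/4]\}$ has density about $1/16$, yet $\1_A(x,y)\1_A(x+t,y)=0$ for all $t\in[\lambda/2,2\lambda]$. So after pigeonholing to a scale with small error you have no lower bound there, and your H\"older sketch does not supply one.

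The paper also smooths $f_1$ in $x$ by $\phi_\lambda$, with $f_2$ smoothed in $y$ by $\phi_{1/\lambda}$ and $\phi$ even and bounded below on $[-10,10]$. Then the kernel dominates $\1_{[-\lambda,\lambda]\times[-\lambda^{-1},\lambda^{-1}]}$, and H\"older on $\lambda\times\lambda^{-1}$ boxes plus Jensen give $\mathcal{N}^1_\lambda(A)\gtrsim|A|^3/R^4$ for every $\lambda\in[1/R,R]$. The price is a second uniform term with the high frequencies in $f_1$. The smoothing lemma absorbs it through $\|f_1\|_{\textup{H}^{-\sigma,0}}$, after a partition of unity into unit windows, since the lemma is local.

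Finally, orthogonality controls the errors in $\ell^2$ over scales, not $\ell^1$: $\int_0^\infty|\mathcal{N}^\varepsilon_\lambda(A)-\mathcal{N}^1_\lambda(A)|^2\,\dd\lambda/\lambda\lesssim|A|^2\log(1/\varepsilon)$. With $\varepsilon\sim\delta^{2/\sigma}$ this gives $\delta^4\log R\lesssim 1+\log(1/\delta)$, hence the exponent $1/4$. Integrating in $\lambda$, as the paper does, and pigeonholing via Chebyshev are equivalent here.
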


In particular,
\[ M(R) \lesssim_{\epsilon} \frac{R^2}{(\log R)^{1/4-\epsilon}} \]
for every $0<\epsilon<1/4$.
We do not know of any published nontrivial (i.e., better than $O(R^2)$) upper estimate on $M(R)$. 
The proof of the upper estimate in \eqref{eq:mainestimate} will be given in Section~\ref{sec:corners_upper}. The exponent $1/4$ in the upper bound, like the Estimate \eqref{eq:mainestimate} itself, is unlikely to be optimal, but it reflects the limitations of currently available techniques.

Theorem~\ref{thm:main} naturally fits into an active line of research investigating \emph{nonlinear corners}, which are configurations of the form
\begin{equation}\label{eq:nonlincorner} 
(x,y),\ (x+\gamma_1(t),y),\ (x,y+\gamma_2(t))
\end{equation}
contained either in continuous structures, like $[0,1]^2$ and $\R^2$, or in discrete structures like $\gridn^2$ and $\mathbb{F}_q^2$.
These are ``curved'' counterparts of the usual (linear) corners, studied extensively since \cite{AS74,Shk06}, with the recent state of the art being \cite{JLLOS25}. 
Most of the literature is concerned with the cases when $\gamma_1$ and $\gamma_2$ are polynomials (often of distinct degrees). Bergelson and Leibman \cite{BL96} (also see \cite{BHMP00}) showed a density theorem on the integer lattice with non-effective bounds. This has been successively quantitatively improved all the way to polylogarithmic bounds like ours; see \cite{CG24,KKL24,GMZ26}. Finite-field results were being developed simultaneously \cite{DLS20,HLY21,Kuc24patterns,Kuc24distinct,KMPW24,Lim25}.

However, the present configuration \eqref{eq:mainpattern} is not a polynomial corner, which rules out purely polynomial tools, such as the PET induction and the degree lowering of Gowers norms. The paper closest in technique to ours is the groundbreaking work of Christ, Durcik, and Roos \cite{CDR21}, who showed a continuous-parameter density theorem for
\[ (x,y),\ (x+t,y),\ (x,y+t^a), \]
where $a>1$ is fixed. (For simplicity of notation, only the case $a=2$ was worked out explicitly in \cite{CDR21}.)
The proof in \cite{CDR21} does not fully extend to the case $a=-1$ (or, in fact, any $a<0$), even though we are primarily influenced by their work, which, in turn, reflects ideas from the paper by Bourgain \cite{B88nonlinear} on the nonlinear Roth theorem in $\R$.
It is also fair to remark that Lim \cite{Lim25} studied \eqref{eq:nonlincorner} for certain rational functions $\gamma_1,\gamma_2$, but in the finite-field setting $\mathbb{F}_q^2$, where rational functions are close in spirit to polynomials.

The problem of estimating $M(R)$ also belongs to the broader theme of forcing finite configurations inside large subsets of Euclidean spaces; see \cite{Kovac:survey} for a brief and accessible introduction to the topic. 
The two-point subpattern $(x+t,y)$, $(x,y+1/t)$ of \eqref{eq:mainpattern} has already attracted some attention in the literature. Indeed, after a rotation and a translation, it becomes the configuration
\begin{equation}\label{eq:BMKpattern}
(x,y), \ (x,y)+(t,t^{-1})
\end{equation}
studied by Bardestani and Mallahi-Karai \cite{BMK17} (also see \cite[Thm.~10]{Kov26}), who proved an estimate of the same polylogarithmic type as \eqref{eq:mainestimate}, namely $O(R^2/\log R)$, but under the stronger hypothesis that a set $A\subseteq[0,R]^2$ avoids every pair of the form \eqref{eq:BMKpattern}.
Two-point patterns of the form
\[ (x,y), \ (x,y)+(t,\gamma(t)) \]
for appropriate curves $\gamma$ have also been studied in \cite{KOS23,DKS23,CM24,BP25}.
Configurations with more points typically require additional ideas, and known approaches often work in higher ambient dimensions \cite{B86:roth,LM16:prod,LM19:hypergraphs,LM20}. A result in that direction is \cite[Thm.~1]{Kov26}. Its special case concerning axis-aligned right triangles is much weaker than Theorem~\ref{thm:main}: it asserts that a set $A\subseteq[0,R]^3$ avoiding every triple of points
\[ (x,y,z), \ (x+t,y,z), \ (x,y+t^{-1}u,z+t^{-1}v)  \]
for $u^2+v^2=1$ satisfies the measure bound $|A|=O(R^3/(\log R)^{36})$. The important point is not the different power of the logarithm, but rather the fact that an additional degree of freedom $(u,v)\in\textup{S}^1$ allows one to use the Fourier decay of the surface measure, which is clearly not available in the plane.

The basic outline of the proof of the upper bound in \eqref{eq:mainestimate} only superficially resembles those in recent papers in geometric Ramsey theory \cite{CMP15:roth,DKR18,K20:anisotrop,DK22,Kov26}. The beginning is quite standard: we define an appropriate pattern-counting form \eqref{eq:justN0} associated with a parameter $\lambda>0$. In our case $\lambda$ determines an approximate scale of eccentricity of the configuration \eqref{eq:mainpattern} that we aim to detect. A certain novelty (employed also in \cite{Kov26}) is that this parameter does not represent any usual notion of configuration ``size.'' In particular, it makes no sense to let $\lambda\rightarrow0^+$ or $\lambda\rightarrow+\infty$. In classical scaling arguments, letting $\lambda$ approach $0$ simply means that one also detects arbitrarily small instances of the desired configuration, which are in fact trivially found by the Lebesgue density theorem. Despite the absence of this interpretation, we can still meaningfully decompose the counting form into the structured, error, and uniform parts \eqref{eq:Ndecomposition}, just as in, e.g., \cite{CMP15:roth,K20:anisotrop,DK22}.
Another notable difference here is that all values of $\lambda$ contribute equally to the overall pattern count. Thus, to control the error part, we can simply integrate in $\lambda$ over $[1/R,R]$ and avoid using Bourgain's trick of pigeonholing in the scales \cite{B86:roth,B88nonlinear}. This additional symmetry is responsible for the better upper bound in \eqref{eq:mainestimate} compared with \cite{CDR21}, with a power of $\log R$, rather than a power of $\log\log R$, in the denominator.
The final ingredient in the proof of Theorem~\ref{thm:main} is to handle the uniform part by reducing its bound to a hyperbolic variant of the trilinear smoothing estimate by Christ, Durcik, and Roos \cite{CDR21}; see Lemma~\ref{lm:smoothing}. 

A trivial lower bound for $M(R)$ is $M(R) \gtrsim R$, and it is realized by a single skew strip of width $\sim 1$ intersecting the square $[0,R]^2$. The proof of the lower estimate in \eqref{eq:mainestimate} uses an example made up of several strips of varying width. It is depicted in Figure~\ref{fig:exampleset}, and the details are presented in Section~\ref{sec:lower_bound}.

%%%%%

\subsection{Sets with no unit-area triangles}
Observe that the configuration \eqref{eq:mainpattern} consists of the vertices of an upward-oriented, axis-aligned right triangle of area $1/2$. We turn to the study of sets that avoid the vertices of any (possibly rotated and scalene) triangle of a fixed area. Part of the motivation comes from the following problem by Erd\H{o}s on unit-area triangles, as quoted from \cite[Prob.~352]{EP}.
 
\begin{quote}
\emph{Is it true that there is an absolute constant $C$ so that, if $A\subseteq\R^2$ has area greater than $C$, then $A$ contains the vertices of a triangle of area $1$?}
\end{quote}

Erd\H{o}s posed this problem in \cite[p.~122--123]{Erd78}, \cite[p.~30]{Erd:Scottish}, and \cite[p.~323]{Erd83:open}, and the problem was also popularized by Croft, Falconer, and Guy \cite[Prob.~G13]{CFG91}, Mauldin \cite[Sec.~1]{Mau02}, \cite[Sec.~5]{Mau13}, and, most recently, the \emph{Erd\H{o}s problems} community \cite[Prob.~352]{EP}.
Our second main result makes partial progress by proving an upper bound on measures of sets $A$ contained in a large square $[0,R]^2$ that do not contain vertices of a unit-area triangle.

\begin{theorem}\label{thm:triangles}
Let $M_{\Delta}(R)$ be the supremum of $|A|$ over measurable sets $A\subseteq [0,R]^2$ containing no triples of points that span a triangle of area $1$. Then for $R\geq 10$ we have
\begin{equation}\label{eq:mainestimate2}
M_{\Delta}(R) \lesssim R^2 \Bigl(\frac{\log\log R}{\log R}\Bigr)^{1/2}. 
\end{equation}
\end{theorem}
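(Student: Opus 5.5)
Since the triangle with vertices $(x,y)$, $(x+t,y)$, $(x,y+2/t)$ has area $1$ for every $t>0$, a set avoiding unit-area triangles in particular avoids the hyperbolic corners $(x,y),(x+t,y),(x,y+2t^{-1})$. Rescaling the second coordinate reduces this to the setting of Theorem~\ref{thm:main}, so the bound $R^2((\log\log R)/\log R)^{1/4}$ follows immediately. To improve the exponent to $1/2$, I will exploit the additional rotational freedom: $A$ avoids the rotated copies of these corners in every direction $\theta\in[0,\pi)$, as well as non-right triangles $(x,y),(x+t,y),(x+s,y+2/t)$ with $s$ arbitrary.

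\textbf{Counting form.} The plan is to define a counting form that averages the hyperbolic-corner form \eqref{eq:justN0} both over rotations $\theta$ and over the shear parameter $s$ in a window of size comparable to the scale:
\[
\mathcal{N}_\lambda(A)=\int\!\!\int\!\!\int 1_A(p)\,1_A(p+t e_\theta)\,1_A(p+s e_\theta+2t^{-1}e_\theta^{\perp})\,\varphi_\lambda(t)\,\psi(s)\dd t\dd s\dd\theta\dd p .
\]
In the third point, the averaging over $(s,\theta)$ behaves like a two-dimensional average, so it smooths the third function in all directions instead of only along a curve. I will then decompose $\mathcal{N}_\lambda$ into structured, error, and uniform parts, exactly as in the proof of Theorem~\ref{thm:main}. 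Each piece is handled as follows.

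\begin{itemize}
\item \emph{Uniform part.} This should now be controlled by a genuinely Fourier-analytic estimate, namely decay of the averaged measure, in place of the hyperbolic smoothing Lemma~\ref{lm:smoothing}. This change should remove the loss that forced the exponent $1/4$.
\item \emph{Error part.} As before, this is handled by integrating in $\lambda$ over $[1/R,R]$.
\item \emph{Structured part.} Its lower bound is $\gtrsim \delta^3$, where $\delta$ is the density, provided the mass of $A$ is not concentrated.
\end{itemize}

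\textbf{Main obstacle: the structured part.} The main difficulty is proving the lower bound on the structured part without additional hypotheses. Since $t$ ranges over very different scales (from $1/R$ to $R$), the lower bound requires $A$ to be spread out: both its density at intermediate scales and its Riesz energy $\iint |p-q|^{-\alpha}1_A(p)1_A(q)\dd p\dd q$ must be controlled. Following the hint in the abstract, I will run an induction on scales that alternates between the two quantities:
\begin{itemize}
\item If the Riesz energy of $A$ is large, then $A$ has a subsquare $Q$ of side $R'\ll R$ with density increased by a factor $1+c$. Rescaling $Q$ to $[0,R']^2$ maps unit area to area $(R/R')^2$, but an affine change of variables preserves the property of avoiding triangles of a fixed area up to a constant factor. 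This is precisely where general triangles, rather than axis-aligned ones, are needed, since they give invariance under the full affine group.
\item If the Riesz energy is small, the counting argument goes through.
\end{itemize}
Balancing the number of density increments, which is $O(\log(1/\delta))$ steps each costing a factor of $\log R$ in available scales, against the uniform and error bounds should give $\delta\lesssim((\log\log R)/\log R)^{1/2}$. The $\log\log$ factor comes from this pigeonholing. I expect making this balance close with the exponent $1/2$ to be the delicate step.
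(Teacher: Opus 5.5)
Your proposal has two genuine gaps: the improvement is placed in the wrong part of the argument, and the induction on scales is not closed.

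\textbf{Where the exponent comes from.} Your counting form, with rotations and a free shear of the third vertex, is close to the paper's. But in Theorem~\ref{thm:main} the uniform part only dictates the choice $\varepsilon\sim\delta^{2/\sigma}$, so its sole contribution is the factor $\log(1/\varepsilon)\sim 1+\log(1/\delta)$, which is the $\log\log R$. The exponent comes from comparing $\int_{1/R}^{R}(\text{structured part})^2\,\dd\lambda/\lambda\sim R^4\delta^6\log R$ with the $\textup{L}^2$ error bound $|A|^2\log(1/\varepsilon)=R^4\delta^2\log(1/\varepsilon)$ of Lemma~\ref{lm:L2error}. Consequences for your plan:
\begin{itemize}
\item Replacing Lemma~\ref{lm:smoothing} by Fourier decay of the averaged measure cannot change the exponent. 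The paper in fact keeps Lemma~\ref{lm:smoothing}, applied for each fixed $\theta$ through \eqref{eq:more_general}. Your route would also have to handle $\theta$ coupling the second and third vertices.
\item Treating the error part ``as before'' reproduces $\delta^4\log R\lesssim 1+\log(1/\delta)$, i.e.\ the exponent $1/4$ again.
\item The structured part needs no spreading hypothesis. Cauchy--Schwarz and Jensen give $\mathcal{M}^1_\lambda(A)\gtrsim R^3\delta^3$ for all $\lambda\in[1/R,R]$ unconditionally, and concentration only helps such convexity bounds.
\end{itemize}

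\textbf{The missing idea.} The Riesz energy enters through the \emph{error} part. Integrate the shear of the third vertex over the whole line, i.e.\ a window of size $\sim R$ rather than the configuration scale. Then the third function becomes $G(y)=\int_{\R}\1_A(x,y)\dd x$. The square-function argument, using $\|\1_A(\cdot,y)\|_{\textup{L}^2}^2=G(y)$, bounds the $\textup{L}^2(\dd\lambda/\lambda)$ norm of the error by $\|G\|_{\textup{L}^2}^2(\log(1/\varepsilon))^{1/2}$. Averaging over rotations turns $\|G_\theta\|_{\textup{L}^2}^2$ into a constant multiple of $\mathcal{E}(A)$; the exponent $\alpha=1$ is forced by the backprojection Jacobian $|x-x'|$. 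This yields $R^6\delta^6\log R\lesssim\mathcal{E}(A)^2(1+\log(1/\delta))$, and Hardy--Littlewood--Sobolev alone already gives exponent $1/3$.

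\textbf{The induction on scales.} A density increment by a factor $1+c$ on a subsquare of unspecified side $R'$ does not close the argument.
\begin{itemize}
\item You do not say what energy threshold counts as large.
\item Excess energy may sit at scales $r\lesssim R_0$, where an increment is useless.
\item If each step costs a constant factor in $\log R$, the $O(\log(1/\delta))$ steps compound into a polynomial loss in $\delta$.
\end{itemize}
The affine-rescaling remark is beside the point. $A\cap Q$ already lies in a square of side $R'$ and avoids unit-area triangles, so only translation invariance is used, which axis-aligned corners also have. General triangles are needed instead for the rotation average and the free shear.

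The paper runs a strong induction on $R$, applying the hypothesis to all disks at once through the layer-cake identity
\[
\int\1_A(z')|z-z'|^{-1}\dd z'=\int_0^{2R}|A\cap\textup{D}(z,r)|\,r^{-2}\dd r+|A|/(2R).
\]
Using the induction hypothesis for $R_0\le r\le R/4$ and trivial bounds for the other $r$ gives $\mathcal{E}(A)\le 5R^3\delta\bigl(\delta+C(\log\log R/\log R)^{1/2}\bigr)$. Inserting this into the energy inequality gives $\delta^2\le C_0(1+\log(1/\delta))^{1/2}(\log R)^{-1/2}\bigl(\delta+C(\log\log R/\log R)^{1/2}\bigr)$, which closes the induction once $C$ is large.
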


Clearly, $M_{\Delta}(R)\lesssim M(R)$, but a more involved proof will give us the upgraded Estimate \eqref{eq:mainestimate2}. In particular, we get
\[ M_{\Delta}(R) \lesssim_{\epsilon} \frac{R^2}{(\log R)^{1/2-\epsilon}} \]
for every $\epsilon>0$.
The proof of Theorem~\ref{thm:triangles} will use a couple of ingredients that might be entirely novel in the context of Euclidean Ramsey theory. The first one is the replacement of the set measure $|A|$ by the smaller quantity of the Riesz energy $\mathcal{E}(A)\lesssim|A|^{3/2}$, see \eqref{eq:def_Riesz_en}, in order to obtain quantitative gain. The second one is the induction on scales $R$, which interchangeably controls $\mathcal{E}(A)$ with $|A|$, and vice versa.

We do not know of any non-trivial lower bounds and the original Erd\H{o}s problem is still open. In fact, Erd\H{o}s even suspected that $M_{\Delta}(R)=4\pi/\sqrt{27}$ for every large $R$, witnessed by the open disk circumscribed around the equilateral triangle of area $1$.
To the best of our knowledge, there are also no published upper bounds on $M_{\Delta}(R)$ of the shape $o(R^2)$ as $R\to\infty$. However, the corresponding discrete density result was commented rather cryptically by Graham \cite[p.\,96]{Gra80}, in relation with his coloring result for triangles.

\begin{quote}
\emph{There is a \textit{density} version of Theorem 1 which can be proved, based on using Szemer\'{e}di's theorem in place of van der Waerden's theorem. This density version asserts that for any $\epsilon > 0$, there is an integer $T(\epsilon)$ so that if $n \geqslant n(\epsilon)$ and $R \subseteq \{(i,j) : 1 \leqslant i, j \leqslant n\}$ with $|R| > \epsilon n^2$, then $R$ contains the vertices of a triangle of area $T(\epsilon)$. We will not discuss those directions in this note, however.}
\end{quote}

In Appendix~\ref{sec:Grahams} we decipher what Graham actually meant and use this to derive 
\begin{equation}\label{eq:Graham_weak}
M_{\Delta}(R)=o(R^2)\quad\text{as } R\to\infty.
\end{equation} 
However, the latter proof does not come anywhere near concrete effective estimates on $M_{\Delta}(R)$.

%%%%%

\subsection{Notation}
\label{subsec:notation}
Let $\mathcal{A}\lesssim\mathcal{B}$, $\mathcal{B}\gtrsim\mathcal{A}$, $\mathcal{A}=O(\mathcal{B})$, and $\mathcal{B}=\Omega(\mathcal{A})$ all mean that there exists a positive constant $C$ such that $|\mathcal{A}|\leq C|\mathcal{B}|$. If the constant is allowed to depend on a set of parameters $P$, we emphasize this as $\mathcal{A}\lesssim_P\mathcal{B}$, $\mathcal{B}\gtrsim_P\mathcal{A}$, $\mathcal{A}=O_P(\mathcal{B})$, or $\mathcal{B}=\Omega_P(\mathcal{A})$.
The meaning of $\mathcal{A}\sim\mathcal{B}$ (resp.\@ $\mathcal{A}\sim_P\mathcal{B}$) is that both $\mathcal{A}\lesssim\mathcal{B}$ and $\mathcal{A}\gtrsim\mathcal{B}$ (resp.\@ $\mathcal{A}\lesssim_P\mathcal{B}$ and $\mathcal{A}\gtrsim_P\mathcal{B}$) hold.

We write $|A|$ for the Lebesgue measure of a measurable subset $A$ of the Euclidean plane. We also write $\textup{D}(z,R)$ for the Euclidean planar disk of radius $R$ centered at $z\in\R^2$. 

The \emph{Fourier transform} of an integrable function $f\colon\R^2\to\mathbb{C}$ will be normalized as
\[ \widehat{f}(\xi,\eta) := \int_{\R^2} f(x,y) e^{-2\pi i(x\xi+y\eta)} \dd x \dd y, \quad (\xi,\eta)\in\R^2. \]
Throughout the paper, $\Fone$ will denote the Fourier transform in the first variable and $\Ftwo$ the Fourier transform in the second variable only:
\[ (\mathcal{F}_1 f)(\xi,y) := \int_{\R} f(x,y) e^{-2\pi ix\xi} \dd x, \quad
(\mathcal{F}_2 f)(x,\eta) := \int_{\R} f(x,y) e^{-2\pi iy\eta} \dd y . \]
\emph{Convolution} of integrable functions $f,g\colon\R^2\to\mathbb{C}$ is defined as
\[ (f\ast g)(x,y) := \int_{\R^2} f(x-u,y-v) g(u,v) \dd u \dd v \]
for almost every $(x,y)\in\R^2$.
Also, $\ast_1$ (resp.\@ $\ast_2$) will denote the convolution of a two-variable function $f$ in the first (resp.\@ second) variable with a given single-variable function $\varphi$:
\[ (f\ast_1 \varphi)(x,y) := \int_{\R} f(x-s,y) \varphi(s)\dd s,\quad
(f\ast_2 \varphi)(x,y) := \int_{\R} f(x,y-s) \varphi(s)\dd s. \]

%%%%%

\section{Proof of the upper bound in Theorem~\ref{thm:main}}
\label{sec:corners_upper}

Fix a nonnegative $C^\infty$ function $\zeta$ that is not identically zero and has compact support contained in the interval $[1/2,2]$. Also let $\phi$ be an even nonnegative $C^\infty$ function with $\int_{\R} \phi =1$, supported in $[-20,20]$ and such that $\phi$ is bounded from below by a positive constant on $[-10,10]$.
For $t>0$ write
\[ \phi_t(x):=\frac{1}{t}\phi\Bigl(\frac{x}{t}\Bigr). \]

If $f_0,f_1,f_2\colon \R^2\to\R$ are bounded, compactly supported, and measurable, define the \emph{exact counting form} associated with a certain scaling parameter $\lambda\in(0,\infty)$ by
\begin{equation}\label{eq:justN0}
\mathcal{N}^0_\lambda(f_0,f_1,f_2) := \int_0^{\infty}\int_{\R^2} f_0(x,y) f_1(x+\lambda u,y) f_2\Bigl(x,y+\frac{1}{\lambda u}\Bigr) \zeta(u)\dd x\dd y\dd u. 
\end{equation}
In the case $\lambda=1$ we simply write $\mathcal{N}$ in place of $\mathcal{N}^0_1$:
\begin{equation}\label{eq:justN}
\mathcal{N}(f_0,f_1,f_2) := \int_0^{\infty}\int_{\R^2} f_0(x,y) f_1(x+u,y) f_2\Bigl(x,y+\frac{1}{u}\Bigr) \zeta(u)\dd x\dd y\dd u.
\end{equation}
For $0<\varepsilon\leq 1$, we also introduce the \emph{smoothed counting form}
\[ \mathcal{N}^\varepsilon_\lambda(f_0,f_1,f_2) := \mathcal{N}^0_\lambda\Bigl(f_0,f_1\ast_1\phi_{\lambda\varepsilon},f_2\ast_2\phi_{\lambda^{-1}\varepsilon}\Bigr). \]
The dominated convergence theorem easily gives
\[ \lim_{\varepsilon\to0^+} \mathcal{N}^\varepsilon_\lambda(f_0,f_1,f_2) = \mathcal{N}^0_\lambda(f_0,f_1,f_2). \]
These forms correspond to a localized piece of the hyperbola $uv=1$ determined by the parameter $\lambda$, namely the piece in a scaled neighborhood of the point $(\lambda,1/\lambda)$.
The other parameter, $\varepsilon$, is viewed as the smoothing parameter, which ``blurs out'' the picture and thus typically overcounts the occurrences of the sought point configuration.

When $f_0=f_1=f_2=\1_A$, we simply write $\mathcal{N}^0_\lambda(A)$ and $\mathcal{N}^\varepsilon_\lambda(A)$, respectively.
The form $\mathcal{N}^0_\lambda(A)$ counts triples
\[ (x,y),\ (x+\lambda u,y),\ \Bigl(x,y+\frac{1}{\lambda u}\Bigr) \]
with $u\in\supp \zeta\subseteq [1/2,2]$. Hence, if $A$ contains no triples of the form in Theorem~\ref{thm:main}, then
\begin{equation}\label{eq:exact_zero}
\mathcal{N}^0_\lambda(A)=0
\quad\text{for every }\lambda>0.
\end{equation}
We decompose
\begin{equation}\label{eq:Ndecomposition}
\mathcal{N}^0_\lambda(A) = \mathcal{N}^1_\lambda(A) + \Bigl( \mathcal{N}^\varepsilon_\lambda(A) - \mathcal{N}^1_\lambda(A) \Bigr) + \Bigl( \mathcal{N}^0_\lambda(A) - \mathcal{N}^\varepsilon_\lambda(A) \Bigr) 
\end{equation}
for a number $\varepsilon\in(0,1)$ that will be chosen later.
In analogy with the literature in graph theory and arithmetic combinatorics, the three terms are respectively called:
\begin{itemize}
  \item $\mathcal{N}^1_\lambda(A)$ = the structured part,
  \item $\mathcal{N}^\varepsilon_\lambda(A) - \mathcal{N}^1_\lambda(A)$ = the error part, and
  \item $\mathcal{N}^0_\lambda(A) - \mathcal{N}^\varepsilon_\lambda(A)$ = the uniform part;
\end{itemize}
the same names were also used for somewhat analogous quantities in \cite{DK22}, \cite{K20:anisotrop}, and in several later papers.
The basic strategy is proving a lower bound for the structured part (i.e., on the very ``coarse'' scale $\varepsilon=1$) that is independent of $\lambda$ taken from the most reasonable scale range $\lambda\in[1/R,R]$. The uniform part should satisfy an upper bound that is again uniform in $\lambda$ and makes it negligible in comparison with the structured part as soon as $\varepsilon$ is chosen to be sufficiently small. Then we are left with the error part, which will be controlled, but rather in the $\textup{L}^2$ norm sense with respect to the measure $\dd\lambda/\lambda$. This will contradict \eqref{eq:exact_zero}, not for a single concrete value of $\lambda$, but ``on the average,'' i.e., once we integrate \eqref{eq:Ndecomposition} in $\lambda$.
The same basic strategy will also be employed in the proof of Theorem~\ref{thm:triangles}.
Once again, we remark that this strategy proves to be efficient, despite the fact that one cannot ``interpolate'' the proof with some trivial limit $\lambda\to0^+$, as is usually the case with configurations that can shrink to a point so that the Lebesgue density theorem applies there.

%%%%%

\subsection{The structured part}
We begin with the study of the first term appearing in decomposition \eqref{eq:Ndecomposition}.

\begin{lemma}\label{lm:structured}
For every measurable $A\subseteq[0,R]^2$ and every $\lambda$ with
\[ R\geq \max\{\lambda,\lambda^{-1}\}, \]
one has
\[ \mathcal{N}^1_\lambda(A) \gtrsim \frac{\abs{A}^3}{R^4}
= R^2\Bigl(\frac{\abs{A}}{R^2}\Bigr)^3. \]
\end{lemma}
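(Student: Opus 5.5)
The plan is to discard the smoothing by bounding the kernels $\phi_{\lambda}$ and $\phi_{\lambda^{-1}}$ from below by indicators. Then I tile the square into $\lambda\times\lambda^{-1}$ rectangles, prove a corner-type Hölder inequality on each rectangle, and sum using Jensen.

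\emph{Step 1 (pointwise lower bound).} Since $\phi\geq c>0$ on $[-10,10]$, we have $\phi_\lambda(s)\geq c\lambda^{-1}\1_{[-10\lambda,10\lambda]}(s)$. For $u\in[1/2,2]$ the interval $[x+\lambda u-10\lambda,\,x+\lambda u+10\lambda]$ contains $[x-8\lambda,x+8\lambda]$. Hence
\[ (\1_A\ast_1\phi_\lambda)(x+\lambda u,y)\geq c\lambda^{-1}\bigl|\{s: |s-x|\leq 8\lambda,\ (s,y)\in A\}\bigr|. \]
Similarly, since $1/(\lambda u)\leq 2/\lambda$,
\[ (\1_A\ast_2\phi_{\lambda^{-1}})(x,y+1/(\lambda u))\geq c\lambda\bigl|\{s:|s-y|\leq 8\lambda^{-1},\ (x,s)\in A\}\bigr|. \]
Both bounds are independent of $u$, so the $u$-integral contributes only the factor $\int\zeta>0$.

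\emph{Step 2 (localization).} Tile the plane by rectangles $Q=I\times J$ with $|I|=\lambda$ and $|J|=\lambda^{-1}$. Because $\lambda,\lambda^{-1}\leq R$, the rectangles meeting $[0,R]^2$ lie in $[-R,2R]^2$, so there are $N\lesssim R^2$ of them, each of area $1$. For $(x,y)\in Q$, the horizontal window of Step 1 contains $I$ and the vertical window contains $J$. Writing $a(y)=|\{s\in I:(s,y)\in A\}|$ and $b(x)=|\{s\in J:(x,s)\in A\}|$, we get
\[ \mathcal{N}^1_\lambda(A)\gtrsim \sum_Q \int_Q \1_A(x,y)\,a(y)\,b(x)\dd x\dd y , \]
where the factors $\lambda^{-1}$ and $\lambda$ have cancelled.

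\emph{Step 3 (the key inequality on each rectangle).} I claim that $\int_Q\1_A\,a(y)b(x)\geq |A\cap Q|^3/(|I||J|)=|A\cap Q|^3$. Write $\1_A=\1_A(ab)^{1/3}(ab)^{-1/3}$ on $A\cap Q$, where $a,b>0$ almost everywhere. Hölder with exponents $3$ and $3/2$ gives
\[ |A\cap Q|\leq\Bigl(\int_Q\1_A ab\Bigr)^{1/3}\Bigl(\int_Q\1_A a^{-1/2}b^{-1/2}\Bigr)^{2/3}. \]
Cauchy--Schwarz bounds the last integral by $(\int_Q\1_A/a)^{1/2}(\int_Q\1_A/b)^{1/2}$. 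By Fubini, $\int_Q\1_A/a\leq|J|$ and $\int_Q\1_A/b\leq|I|$. Rearranging proves the claim. This is the only genuinely non-routine step; it is the continuous analogue of the standard corner-counting Cauchy--Schwarz argument.

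\emph{Step 4 (summation).} By convexity of $t\mapsto t^3$ (Jensen/Hölder over the $N\lesssim R^2$ rectangles),
\[ \sum_Q|A\cap Q|^3\geq N^{-2}\Bigl(\sum_Q|A\cap Q|\Bigr)^3\gtrsim\frac{|A|^3}{R^4}, \]
which is the lemma. The main point to check carefully is Step 1's containment of windows uniformly in $u\in\supp\zeta$, which is exactly why $\phi$ was required to be positive on $[-10,10]$.
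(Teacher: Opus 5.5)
Your proof is correct and follows essentially the same route as the paper. The paper likewise bounds the kernel $K^+_\lambda$ below by $\1_{[-\lambda,\lambda]\times[-\lambda^{-1},\lambda^{-1}]}$ (equivalent to your pointwise bounds on the two convolutions), uses $\sim R^2$ rectangles of area at most $1$, applies the same per-rectangle H\"older inequality (as one three-factor H\"older instead of your H\"older plus Cauchy--Schwarz), and finishes with Jensen.
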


\begin{proof}
First, we rewrite the smoothed form with $\varepsilon=1$ as
\[ \mathcal{N}^1_\lambda(f_0,f_1,f_2) = \int_{\R^4} f_0(x,y) f_1(x',y) f_2(x,y') K^+_\lambda (x-x',y-y') \dd x \dd x' \dd y \dd y', \]
where
\begin{equation}\label{eq:Kplus}
K^+_\lambda(x,y) := \int_0^{\infty} \phi\Bigl(\frac{x}{\lambda}+u\Bigr) \phi\Bigl(\lambda y+\frac{1}{u}\Bigr) \zeta(u) \dd u. 
\end{equation}
Since $\phi$ is bounded from below on $[-10,10]$ and $\zeta$ is supported in $[1/2,2]$, we have the pointwise kernel bound
\begin{equation}\label{eq:Kplusbd}
K^+_\lambda \gtrsim \1_{[-\lambda,\lambda]\times[-\lambda^{-1},\lambda^{-1}]},
\end{equation}
which we can plug into the last representation of $\mathcal{N}^1_\lambda(A)=\mathcal{N}^1_\lambda(\1_A,\1_A,\1_A)$.
That way we obtain 
\begin{equation}\label{eq:smoothedN1}
\mathcal{N}^1_\lambda(A) \gtrsim \sum_{Q=I\times J} \int_{I\times I\times J\times J} \1_A(x,y)\1_A(x',y)\1_A(x,y')\dd x\dd x'\dd y\dd y', 
\end{equation}
where the sum runs over the partition of $[0,R]^2$ into $N\sim R^2$ many rectangles $Q=I\times J$ with $\abs{I}\leq \lambda$ and $\abs{J}\leq 1/\lambda$.

Fix one such rectangle $Q=I\times J$. Abbreviate
\[ F(x):=\int_J \1_{A}(x,y') \dd y', \quad G(y):=\int_I \1_{A}(x',y) \dd x'. \]
By H\"older's inequality,
\begin{align*}
& |A\cap Q|^3 = \Bigl(\int_{I\times J} \1_{A}(x,y) \dd x \dd y \Bigr)^3 \\
& \leq \Bigl(\int_{I\times J} \1_{A}(x,y)F(x)G(y) \dd x \dd y \Bigr)
\Bigl(\underbrace{\int_{I\times J} \frac{\1_{A}(x,y)}{F(x)} \dd x \dd y}_{\leq|I|}\Bigr)
\Bigl(\underbrace{\int_{I\times J} \frac{\1_{A}(x,y)}{G(y)} \dd x \dd y}_{\leq|J|}\Bigr).
\end{align*}
Here we interpret $\1_{A}(x,y)/F(x)$ and $\1_{A}(x,y)/G(y)$ as $0$ wherever the denominator vanishes.
Hence, the corresponding summand in \eqref{eq:smoothedN1} satisfies
\[ \int_{I\times J} \1_{A}(x,y)F(x)G(y) \dd x \dd y
\geq \frac{|A\cap Q|^3}{|I||J|} \geq \abs{A\cap Q}^3, \]
since $|I||J|\leq \lambda\lambda^{-1}=1$.
By convexity of $t\mapsto t^3$ and Jensen's inequality,
\[ \sum_Q |A\cap Q|^3 \geq N^{-2}\Bigl(\sum_Q |A\cap Q|\Bigr)^3 \gtrsim \frac{|A|^3}{R^4}. \]
Combining the last two displayed estimates proves the claim.
\end{proof}

%%%%%

\subsection{The error part}
The second needed ingredient is an $\textup{L}^2$ control of the error part from \eqref{eq:Ndecomposition}.

\begin{lemma}\label{lm:L2error}
For every bounded measurable $A\subseteq\R^2$ and every $0<\varepsilon\leq 1$,
\begin{equation}\label{eq:L2error}
\Bigl( \int_0^{\infty} \abs{\mathcal{N}^\varepsilon_\lambda(A)-\mathcal{N}^1_\lambda(A)}^2 \,\frac{\dd\lambda}{\lambda} \Bigr)^{1/2}
\lesssim |A| \Bigl(\log\frac{1}{\varepsilon}\Bigr)^{1/2}.
\end{equation}
\end{lemma}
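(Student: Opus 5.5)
The plan is to telescope the two smoothings one at a time, so that the difference splits into two terms, each of which only changes the smoothing in one variable. Then each term is controlled by a one-dimensional Littlewood--Paley type estimate in the scale variable $\lambda$ via Plancherel. The point is \emph{not} to differentiate in $\varepsilon$ and integrate over intermediate scales: doing so (with Minkowski or Cauchy--Schwarz in $s$) loses a full $\log(1/\varepsilon)$ rather than its square root. Instead, we keep the difference $\phi_{\lambda\varepsilon}-\phi_\lambda$ as a single multiplier, whose square-integrated symbol over $\dd\lambda/\lambda$ is of size $\log(1/\varepsilon)$.

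Concretely, I would write $\mathcal{N}^\varepsilon_\lambda(A)-\mathcal{N}^1_\lambda(A)=\mathrm{I}_\lambda+\mathrm{II}_\lambda$ with
\[ \mathrm{I}_\lambda := \mathcal{N}^0_\lambda\bigl(\1_A,\1_A\ast_1(\phi_{\lambda\varepsilon}-\phi_{\lambda}),\1_A\ast_2\phi_{\lambda^{-1}\varepsilon}\bigr),\quad
\mathrm{II}_\lambda := \mathcal{N}^0_\lambda\bigl(\1_A,\1_A\ast_1\phi_{\lambda},\1_A\ast_2(\phi_{\lambda^{-1}\varepsilon}-\phi_{\lambda^{-1}})\bigr). \]
For $\mathrm{I}_\lambda$ I bound the third function by $\|\1_A\ast_2\phi_{\lambda^{-1}\varepsilon}\|_\infty\le 1$. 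For fixed $u$, Cauchy--Schwarz in $(x,y)$ together with the translation invariance $(x,y)\mapsto(x+\lambda u,y)$ gives
\[ |\mathrm{I}_\lambda|\le \|\zeta\|_{\textup{L}^1}\,|A|^{1/2}\,\bigl\|\1_A\ast_1(\phi_{\lambda\varepsilon}-\phi_\lambda)\bigr\|_{\textup{L}^2(\R^2)}. \]
Symmetrically, $|\mathrm{II}_\lambda|\lesssim |A|^{1/2}\|\1_A\ast_2(\phi_{\lambda^{-1}\varepsilon}-\phi_{\lambda^{-1}})\|_{\textup{L}^2}$. Here the shift is $(x,y)\mapsto(x,y+1/(\lambda u))$ and the bound $\|\1_A\ast_1\phi_\lambda\|_\infty\le 1$ is used.

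Squaring and integrating against $\dd\lambda/\lambda$, for $\mathrm{I}$ I apply Plancherel in the first variable:
\[ \int_0^\infty \bigl\|\1_A\ast_1(\phi_{\lambda\varepsilon}-\phi_\lambda)\bigr\|_2^2\,\frac{\dd\lambda}{\lambda}
=\int_{\R^2}|\Fone\1_A(\xi,y)|^2\Bigl(\int_0^\infty\bigl|\widehat{\phi}(\lambda\varepsilon\xi)-\widehat{\phi}(\lambda\xi)\bigr|^2\frac{\dd\lambda}{\lambda}\Bigr)\dd\xi\dd y. \]
The substitution $a=\lambda|\xi|$ (using that $\phi$ is even, so $\widehat\phi$ is even) makes the inner integral independent of $\xi$, equal to $\int_0^\infty|\widehat\phi(\varepsilon a)-\widehat\phi(a)|^2\,\dd a/a$. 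For $\mathrm{II}$ I do the same in the second variable; the substitution $\lambda\mapsto\lambda^{-1}$ preserves $\dd\lambda/\lambda$, so it reduces to the same quantity.

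The remaining step, which I expect to be the only one needing care, is the scalar estimate
\[ \int_0^\infty|\widehat\phi(\varepsilon a)-\widehat\phi(a)|^2\,\frac{\dd a}{a}\lesssim \log\frac1\varepsilon. \]
I would split the range into three parts. On $a\le 1$, use $\widehat\phi(0)=1$ and the smoothness of $\widehat\phi$ to bound the integrand by $(1-\varepsilon)^2 a^2\lesssim a^2$, which is integrable against $\dd a/a$. On $a\ge 1/\varepsilon$, the Schwartz decay of $\widehat\phi$ bounds the integrand by $\lesssim(\varepsilon a)^{-2}$, which is again integrable against $\dd a/a$ on this range. On $[1,1/\varepsilon]$, the trivial bound $4$ contributes $4\log(1/\varepsilon)$. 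Since $\Fone$ is unitary, the outer integral of $|\Fone\1_A|^2$ equals $|A|$. Combining the pieces gives
\[ \int_0^\infty|\mathrm{I}_\lambda|^2\,\frac{\dd\lambda}{\lambda}+\int_0^\infty|\mathrm{II}_\lambda|^2\,\frac{\dd\lambda}{\lambda}\lesssim|A|^2\log\frac1\varepsilon. \]
Minkowski's inequality in $\textup{L}^2(\dd\lambda/\lambda)$ then yields \eqref{eq:L2error}.
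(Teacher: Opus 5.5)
Your proposal is correct and follows essentially the same route as the paper. It uses the same splitting into the two one-variable error terms, the same $\textup{L}^\infty$ bound on the remaining smoothed factor followed by Cauchy--Schwarz and Plancherel, and the same three-range estimate of $\int_0^\infty|\widehat\phi(\varepsilon a)-\widehat\phi(a)|^2\,\dd a/a$.

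One minor point, which the paper's proof shares: the three-range split only gives $\lesssim 1+\log(1/\varepsilon)$, which does not tend to $0$ as $\varepsilon\to1$, whereas the statement requires $\lesssim\log(1/\varepsilon)$. For $\varepsilon\in[1/2,1]$, keep the factor $1-\varepsilon$ from the mean value theorem on all of $(0,\infty)$ and use the decay of $\widehat\phi'$; this gives $\lesssim(1-\varepsilon)^2\leq\log(1/\varepsilon)$.
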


\begin{proof}
Using
\[ \phi_{\lambda\varepsilon}(x) \phi_{\lambda^{-1}\varepsilon}(y) - \phi_\lambda(x) \phi_{\lambda^{-1}}(y)
= (\phi_{\lambda\varepsilon}-\phi_\lambda)(x) \,\phi_{\lambda^{-1}\varepsilon}(y)
+ \phi_\lambda(x) \,(\phi_{\lambda^{-1}\varepsilon} -\phi_{\lambda^{-1}})(y) \]
the form definitions yield
\[ \mathcal{N}^\varepsilon_\lambda(A)-\mathcal{N}^1_\lambda(A)
= E_{1}(\lambda,\varepsilon) + E_{2}(\lambda,\varepsilon), \]
where
\begin{align*}
E_{1}(\lambda,\varepsilon)
&:= \mathcal{N}^0_\lambda\Bigl(\1_A, \1_A\ast_1(\phi_{\lambda\varepsilon}-\phi_{\lambda}), \1_A\ast_2\phi_{\lambda^{-1}\varepsilon}\Bigr), \\
E_{2}(\lambda,\varepsilon)
&:= \mathcal{N}^0_\lambda\Bigl(\1_A, \1_A\ast_1\phi_{\lambda}, \1_A\ast_2(\phi_{\lambda^{-1}\varepsilon} -\phi_{\lambda^{-1}})\Bigr).
\end{align*}
For the first term, we use 
$0\leq \1_A\ast_2\phi_{\lambda^{-1}\varepsilon}\leq1$.
For each fixed $u$ and $\lambda$, the Cauchy--Schwarz inequality in $(x,y)$ gives
\[ \int_{\R^2} \1_{A}(x,y)\,
\big|\Bigl(\1_{A}\ast_1(\phi_{\lambda\varepsilon}-\phi_{\lambda})\Bigr)(x+\lambda u,y)\big| \dd x\dd y
\leq \|\1_{A}\|_{\textup{L}^2}\,\big\|\1_{A}\ast_1(\phi_{\lambda\varepsilon}-\phi_{\lambda})\big\|_{\textup{L}^2}, \]
so that, integrating this estimate against $\zeta(u)\dd u$, we obtain
\[ |E_1(\lambda,\varepsilon)| \lesssim |A|^{1/2} \big\|\1_A\ast_1(\phi_{\lambda\varepsilon}-\phi_{\lambda})\big\|_{\textup{L}^2}. \]
Consequently, by Plancherel's theorem,
\begin{align*}
\int_0^\infty |E_1(\lambda,\varepsilon)|^2\frac{\dd\lambda}{\lambda}
& \lesssim |A| \int_0^\infty \big\|\1_A\ast_1(\phi_{\lambda\varepsilon}-\phi_{\lambda})\big\|_{\textup{L}^2}^2 \frac{\dd\lambda}{\lambda} \\
& \lesssim |A|\int_{\R^2} |\widehat{\1_A}(\xi,\eta)|^2
\biggl(\int_0^\infty \big|\widehat{\phi}(\varepsilon\lambda\xi)-\widehat{\phi}(\lambda\xi)\big|^2\frac{\dd\lambda}{\lambda}\biggr)\dd\xi\dd\eta \\
& \lesssim |A|\log\frac1{\varepsilon}\int_{\R^2}|\widehat{\1_A}(\xi,\eta)|^2\dd\xi\dd\eta
= |A|^2\log\frac1{\varepsilon}.
\end{align*}
Namely, the inner integral is $\lesssim\log(1/\varepsilon)$ uniformly in $\xi\neq0$, which is seen by changing variables $s=\lambda|\xi|$ and observing
\begin{equation}\label{eq:thephitrick}
\int_0^\infty \big|\widehat{\phi}(\varepsilon s)-\widehat{\phi}(s)\big|^2 \frac{\dd s}{s} \lesssim \log\frac1{\varepsilon}. 
\end{equation}
Estimate \eqref{eq:thephitrick} is, in turn, a consequence of
\[ \big|\widehat{\phi}(\varepsilon s)-\widehat{\phi}(s)\big| \lesssim 
\begin{cases}
s & \text{for } 0<s\leq 1, \\
1 & \text{for } 1<s\leq \varepsilon^{-1}, \\
(\varepsilon s)^{-1} & \text{for } s>\varepsilon^{-1},
\end{cases} \]
which follows from $\widehat{\phi}(0)=1$, smoothness, and the rapid decay of $\widehat{\phi}$.
The same argument in the second variable gives
\[ \int_0^\infty |E_2(\lambda,\varepsilon)|^2 \frac{\dd\lambda}{\lambda}
\lesssim |A|^2\log\frac1{\varepsilon}, \]
which finally proves \eqref{eq:L2error}.
\end{proof}

%%%%%

\subsection{The uniform part}

For $s_1,s_2\in\R$ we use the anisotropic Sobolev norm
\[ \|f\|_{\textup{H}^{s_1,s_2}} := \Bigl( \int_{\R^2} \abs{\widehat f(\xi,\eta)}^2 \Bigl(1+\xi^2\Bigr)^{s_1} \Bigl(1+\eta^2\Bigr)^{s_2} \dd \xi\dd \eta \Bigr)^{1/2}. \]
A genuinely non-elementary ingredient in the paper is the following local estimate, which is a hyperbolic analogue of the trilinear smoothing inequality of Christ, Durcik, and Roos \cite{CDR21}.

\begin{lemma}[Hyperbolic trilinear smoothing]\label{lm:smoothing}
There exists $\sigma>0$ such that the trilinear form \eqref{eq:justN} satisfies
\[ |\mathcal{N}(f_0,f_1,f_2)|
\lesssim \|f_0\|_{\textup{L}^{\infty}} \|f_1\|_{\textup{H}^{-\sigma,0}} \|f_2\|_{\textup{H}^{0,-\sigma}} \]
for bounded measurable functions $f_0,f_1,f_2\colon \R^2\to\R$ supported in $[-10,10]^2$.
\end{lemma}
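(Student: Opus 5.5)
The plan is to reduce the estimate to frequency-localized pieces, handle the off-diagonal frequency interactions by non-stationary phase, and handle the diagonal interactions by a Christ--Durcik--Roos type argument.

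\textbf{Trivial bound and decomposition.} For each fixed $u$, the Cauchy--Schwarz inequality in $(x,y)$ gives
\[ |\mathcal{N}(f_0,f_1,f_2)|\lesssim \|f_0\|_{\textup{L}^\infty}\|f_1\|_{\textup{L}^2}\|f_2\|_{\textup{L}^2}. \]
Here $f_1,f_2$ may be replaced by arbitrary $\textup{L}^2$ functions, since $f_0$ localizes $(x,y)$ to $[-10,10]^2$ and $u\in[1/2,2]$. We decompose $f_1=\sum_{j\geq0}P^1_j f_1$ and $f_2=\sum_{k\geq0}P^2_k f_2$ by Littlewood--Paley projections: $P^1_j$ localizes $|\xi|\sim2^j$ in the first variable and $P^2_k$ localizes $|\eta|\sim 2^k$ in the second, with $j=0$ and $k=0$ meaning $|\xi|\lesssim1$ and $|\eta|\lesssim 1$. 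The goal is the single-scale bound
\[ |\mathcal{N}(f_0,P^1_jf_1,P^2_kf_2)|\lesssim 2^{-\sigma_0\max(j,k)}\|f_0\|_{\textup{L}^\infty}\|P^1_jf_1\|_{\textup{L}^2}\|P^2_kf_2\|_{\textup{L}^2} \]
for some $\sigma_0>0$. Since $2^{-\sigma_0\max(j,k)}\leq 2^{-\sigma_0 j/2}2^{-\sigma_0 k/2}$, summing over $j,k$ factorizes the double sum. Cauchy--Schwarz in $j$ (and in $k$) then gives the lemma with $\sigma=\sigma_0/4$, because $\sum_j 2^{-\sigma_0 j/2}\|P^1_jf_1\|_{\textup{L}^2}\lesssim\|f_1\|_{\textup{H}^{-\sigma_0/4,0}}$.

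\textbf{Off-diagonal scales.} Suppose $|j-k|\geq C$. Write $f_1(x+u,y)$ and $f_2(x,y+1/u)$ through $\Fone$ and $\Ftwo$. The $u$-integral then becomes the multiplier
\[ m(\xi,\eta)=\int e^{2\pi i(u\xi+\eta/u)}\zeta(u)\dd u, \]
whose phase derivative $\xi-\eta/u^2$ is $\gtrsim 2^{\max(j,k)}$ on $u\in[1/2,2]$. Repeated integration by parts gives $|m|\lesssim_N 2^{-N\max(j,k)}$. We then bound the form crudely:
\[ \int_{[-10,10]^2}|f_0(x,y)|\iint |m(\xi,\eta)|\,|\Fone f_1(\xi,y)|\,|\Ftwo f_2(x,\eta)|\dd\xi\dd\eta\dd x\dd y. \]
Applying Cauchy--Schwarz in $\xi$ and in $\eta$ over the frequency supports costs a factor $2^{(j+k)/2}$. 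The rapid decay of $m$ absorbs this factor, which gives the single-scale bound even with $\sigma_0$ large.

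\textbf{Diagonal scales, the main obstacle.} For $|j-k|<C$ the phase has the stationary point $u=\sqrt{\eta/\xi}\in[1/2,2]$. Here the form is genuinely trilinear and $f_0$ is only bounded, so I would follow the scheme of \cite{CDR21}. First, apply Cauchy--Schwarz in $(x,y)$ to remove $f_0$, producing a form in $f_1\otimes\overline{f_1}$ and $f_2\otimes\overline{f_2}$ with two parameters $u,u'$. Next, change variables and apply a second Cauchy--Schwarz to remove $f_2$. What remains is a box-type form in $f_1$ alone whose parameters range over a family of curves built from $u\mapsto(u,1/u)$. The gain $2^{-\sigma_0 j}$ should then come from a sublevel-set estimate for the associated Jacobian determinant, combined with the frequency localization $|\xi|\sim 2^j$; this is the analogue of the key nondegeneracy step in \cite{CDR21}.

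The step I expect to be hardest is verifying that this determinant vanishes only to finite order for the hyperbola. Its vanishing must be controlled by $|u-u'|$, so that the degenerate set $u\approx u'$, of measure $2^{-cj}$, can simply be discarded. Globally in $t$ this is exactly where the argument of \cite{CDR21} breaks down for negative exponents. Restricted to $u\in\supp\zeta\subseteq[1/2,2]$, however, the curve $(u,1/u)$ has nonvanishing curvature and all its derivatives are comparable to $1$, so I would first try to check directly that their polynomial-type nondegeneracy computation goes through with $t^2$ replaced by $t^{-1}$ on this compact range.
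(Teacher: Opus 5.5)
Your reduction, the Littlewood--Paley summation, and the off-diagonal case are fine and agree in substance with the paper. The paper expands $m\psi_j\psi_k$ in a Fourier series on a square of side $2^{\max\{j,k\}+2}$. You instead take absolute values and lose $2^{(j+k)/2}$ by Cauchy--Schwarz in $\xi$ and $\eta$. Both work because $|m|\lesssim_N 2^{-N\max\{j,k\}}$ there.

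The gap is the diagonal block $|j-k|\le C$. This is the whole nontrivial content of the lemma, and you only describe it as a plan. The mechanism is not substantiated. After your two Cauchy--Schwarz steps, the remaining form contains products like $f_1(x+u,\cdot)\overline{f_1(x+u',\cdot)}$, which are no longer localized at frequencies $\sim 2^j$. Meanwhile $f_1$ is merely measurable in its second variable, and that variable is coupled to the parameter through $y-1/u$. So you cannot integrate by parts in $u$. You also do not say which Jacobian you mean, or how a sublevel-set bound for it would turn the $x$-frequency localization into a gain $2^{-\sigma_0 j}$. Handling exactly this roughness is what the machinery of \cite{CDR21} and \cite{GaitanLie24} is built for.

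Moreover, the nondegeneracy you intend to check is not the right criterion: nonvanishing curvature of $u\mapsto(u,1/u)$ with derivatives comparable to $1$ on $[1/2,2]$. An argument resting only on it would prove a false statement. Take $\gamma(t)=e^t$, which has the same properties on compact intervals. Let $f_1(x,y)=e^{i\lambda e^{x}}$, $f_2(x,y)=e^{-i\lambda e^{x}y}$, $f_0(x,y)=e^{i\lambda e^{x}y}$, complex-valued by multilinearity and multiplied by suitable cutoffs. Then $f_0(x,y)f_1(x+t,y)f_2(x,y+e^t)\equiv1$ on the support, so the form is $\gtrsim1$. Yet $\|f_1\|_{\textup{H}^{-\sigma,0}}$ and $\|f_2\|_{\textup{H}^{0,-\sigma}}$ are $O(\lambda^{-\sigma})$.

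The relevant hypothesis in \cite[Thm.~3.1.12]{LinDiss} is $\theta\theta''-\theta'(1+\theta')\neq0$ for $\theta=\gamma'/\gamma''$. For $\gamma(t)=-1/t$ one has $\theta=-t/2$ and the expression equals $1/4$; for $e^t$ it vanishes.

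The paper settles the diagonal case by invoking that theorem after this check. It also makes adaptations your plan would need:
\begin{itemize}
\item splitting $(\xi,\eta)$ by sign: for opposite signs the phase is non-stationary and one argues as in the off-diagonal case, while for equal signs the stationary point $\sqrt{\eta/\xi}$ lies in $[2^{-6},2^6]$, not necessarily in $[1/2,2]$;
\item running the argument with $\gamma$ on $I=[2^{-6},2^6]$;
\item treating $j,k\le20$ trivially.
\end{itemize}
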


Lemma~\ref{lm:smoothing} will be proved in Section~\ref{sec:smoothing_proof} of the appendix. Here we return to decomposition \eqref{eq:Ndecomposition} and manage to control its last term.

\begin{lemma}\label{lm:uniform}
There exists $\sigma>0$ such that, for every bounded measurable set $A\subseteq\R^2$, every $\lambda>0$, and every $0<\varepsilon\leq 1$,
\[ \abs{\mathcal{N}^0_\lambda(A)-\mathcal{N}^\varepsilon_\lambda(A)} \lesssim |A|\varepsilon^{\sigma}. \]
\end{lemma}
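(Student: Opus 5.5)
The plan is to reduce to the case $\lambda=1$ by anisotropic rescaling, write the difference as a sum of two trilinear terms, and bound each term by Lemma~\ref{lm:smoothing} after localizing to unit squares.

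\textbf{Rescaling.} The map $(x,y)\mapsto(\lambda x,\lambda^{-1}y)$ preserves Lebesgue measure. Let $A_\lambda$ be the preimage of $A$ under this map, so $|A_\lambda|=|A|$. Substituting in \eqref{eq:justN0} shows $\mathcal{N}^0_\lambda(A)=\mathcal{N}(\1_{A_\lambda},\1_{A_\lambda},\1_{A_\lambda})$. The convolutions with $\phi_{\lambda\varepsilon}$ in $x$ and $\phi_{\lambda^{-1}\varepsilon}$ in $y$ become convolutions with $\phi_\varepsilon$ in each variable. Hence it suffices to treat $\lambda=1$.

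We may also assume $\varepsilon\leq\varepsilon_0$ for a small absolute constant $\varepsilon_0$ (say $1/100$). Indeed, each form is trivially $\lesssim|A|$: bound $f_1,f_2$ by $1$ and integrate $f_0$. For $\varepsilon>\varepsilon_0$ the claim then holds with a constant depending on $\varepsilon_0$.

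\textbf{Telescoping.} Write $f=\1_A$. Then
\[ \mathcal{N}^0_1(A)-\mathcal{N}^\varepsilon_1(A) = \mathcal{N}\bigl(f,\,f-f\ast_1\phi_\varepsilon,\,f\bigr) + \mathcal{N}\bigl(f,\,f\ast_1\phi_\varepsilon,\,f-f\ast_2\phi_\varepsilon\bigr). \]
The key Fourier observation is that $|1-\widehat\phi(\varepsilon\xi)|\lesssim\min\{1,\varepsilon|\xi|\}\leq(\varepsilon|\xi|)^\sigma$ for $0<\sigma\leq1$, using $\widehat\phi(0)=1$ and smoothness. Therefore
\[ \|g-g\ast_1\phi_\varepsilon\|_{\textup{H}^{-\sigma,0}}\lesssim\varepsilon^\sigma\|g\|_{\textup{L}^2}, \]
and analogously in the second variable with $\textup{H}^{0,-\sigma}$. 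Trivially, $\|h\|_{\textup{H}^{0,-\sigma}}\leq\|h\|_{\textup{L}^2}$ and $\|h\|_{\textup{H}^{-\sigma,0}}\leq\|h\|_{\textup{L}^2}$.

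\textbf{Localization.} Lemma~\ref{lm:smoothing} needs supports in $[-10,10]^2$, so partition $\R^2$ into unit squares $Q$ and use translation invariance of $\mathcal{N}$. For $(x,y)\in Q$ and $u\in[1/2,2]$, the points $(x+u,y)$ and $(x,y+1/u)$ lie within distance $3$ of $Q$. Since $\phi_\varepsilon$ is supported in $[-20\varepsilon,20\varepsilon]$, convolutions evaluated there only see $f$ on the $4$-neighbourhood $Q^{\ast}$ of $Q$.

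So in the first term replace $f_0$ by $\1_{A\cap Q}$, and replace the second and third arguments by
\[ \1_{A\cap Q^\ast}-\1_{A\cap Q^\ast}\ast_1\phi_\varepsilon \quad\text{and}\quad \1_{A\cap Q^\ast}. \]
These agree with the original functions wherever the integrand sees them. After translating $Q$ to near the origin, all three functions are supported in $[-10,10]^2$. Do the same for the second term.

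\textbf{Applying Lemma~\ref{lm:smoothing}.} Each localized piece is bounded by $\lesssim\varepsilon^\sigma|A\cap Q^\ast|^{1/2}\cdot|A\cap Q^\ast|^{1/2}=\varepsilon^\sigma|A\cap Q^\ast|$. In the second term, use $\|\1_{A\cap Q^\ast}\ast_1\phi_\varepsilon\|_{\textup{H}^{-\sigma,0}}\leq\|\1_{A\cap Q^\ast}\|_{\textup{L}^2}$ and put the $\varepsilon^\sigma$ gain on the third argument. The neighbourhoods $Q^\ast$ have bounded overlap, so summing over $Q$ gives $\lesssim\varepsilon^\sigma|A|$.

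The main obstacle is this localization bookkeeping. One must check that cutting off before convolving gives functions that coincide with the originals on the relevant region, and that everything fits inside $[-10,10]^2$; this is exactly why small $\varepsilon$ is assumed. The genuinely hard analysis is contained in Lemma~\ref{lm:smoothing}, which we take as given.
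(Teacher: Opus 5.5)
Your proof is correct and follows the paper's argument: you rescale to $\lambda=1$ via the area-preserving map $(x,y)\mapsto(\lambda x,\lambda^{-1}y)$, split the difference into the same two telescoped terms, localize to unit windows, and apply Lemma~\ref{lm:smoothing} together with $|1-\widehat\phi(\varepsilon\xi)|\lesssim(\varepsilon|\xi|)^\sigma$. The differences are minor. You use sharp cutoffs to $Q$ and $Q^\ast$ with bounded overlap, where the paper uses a smooth partition of unity plus Cauchy--Schwarz in $m$; the paper also proves the bound for general $g,h\in\textup{L}^2$ so it can be reused in Lemma~\ref{lm:uniform2}. Finally, your reduction to small $\varepsilon$ explicitly secures the $[-10,10]^2$ support condition, which the paper's localization does not address when $\varepsilon$ is close to $1$.
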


\begin{proof}
Since $\lambda>0$ is fixed, we can define the anisotropically rescaled indicator function
\[ f(x,y):=\1_A\Bigl(\lambda x, \frac{y}{\lambda}\Bigr), \]
which satisfies
\begin{equation*}
\|f\|_{\textup{L}^\infty}\leq 1, \quad \|f\|_{\textup{L}^2} = |A|^{1/2}.
\end{equation*}
We will make sure that the estimates below depend on no other properties of $f$, and in particular do not depend on $\lambda$.
This is a place in our proof where the scaling $(t,t^{-1})$ comes as very convenient.
Recall the definitions of the counting forms. Substitutions $x=\lambda x'$, $y=y'/\lambda$ give
\[ \mathcal{N}^0_\lambda(A) = \mathcal{N}(f, f, f) \]
and
\[ \mathcal{N}^\varepsilon_\lambda(A) = \mathcal{N}\bigl(f,\,f\ast_1\phi_\varepsilon,\,f\ast_2\phi_\varepsilon\bigr) \]
for $0<\varepsilon\leq1$.
Our goal is to prove a slightly more general estimate than needed:
\begin{equation}\label{eq:more_general}
\big| \mathcal{N}(f, g, h) - \mathcal{N}\bigl(f,\,g\ast_1\phi_\varepsilon,\,h\ast_2\phi_\varepsilon\bigr) \big| \lesssim \varepsilon^\sigma \|f\|_{\textup{L}^\infty} \|g\|_{\textup{L}^2} \|h\|_{\textup{L}^2},
\end{equation}
which will prove the lemma, once one takes $g$ and $h$ to be the same as $f$ defined above.
The purpose of this generalization is to reuse \eqref{eq:more_general} later, in the proof of Lemma~\ref{lm:uniform2} relevant for Theorem~\ref{thm:triangles}.

First, we decompose the difference on the left-hand side of \eqref{eq:more_general} as $T_1(\varepsilon) + T_2(\varepsilon)$, where
\begin{align*}
T_1(\varepsilon) &:= \mathcal{N}\bigl(f, \,g - g\ast_1\phi_\varepsilon, \,h\bigr),\\
T_2(\varepsilon) &:= \mathcal{N}\bigl(f, \,g\ast_1\phi_\varepsilon, \,h - h\ast_2\phi_\varepsilon\bigr).
\end{align*}
We now localize the two terms $T_1$ and $T_2$. The point of the localization is only to put the local smoothing estimate on bounded ``windows,'' but no estimate on the number of windows will be used. All constants in this localization argument are independent of $R$, $\lambda$, and $\varepsilon$.
Choose a partition of unity on $\R^2$,
\[ 1 = \sum_{m\in\Z^2} \rho(z-m), \quad z\in\R^2, \]
for some $C^\infty$ function $\rho$ supported in the square $[-2,2]^2$.
For $m\in\Z^2$ put
\[ f_m(z) := f(z)\rho(z-m),\quad
g_m(z) := g(z)\rho(z-m),\quad
h_m(z) := h(z)\rho(z-m), \]
so that only finitely many pieces $f_m,g_m,h_m$ are nonzero and bounded overlap gives
\begin{equation}\label{eq:local-L2-overlap}
\sum_{m\in\Z^2} \|g_m\|_{\textup{L}^2}^2\lesssim \|g\|_{\textup{L}^2}^2,\quad
\sum_{m\in\Z^2} \|h_m\|_{\textup{L}^2}^2\lesssim \|h\|_{\textup{L}^2}^2.
\end{equation}

We expand the first term as
\begin{equation}\label{eq:T1-localized-sum}
T_1(\varepsilon) = \sum_{k\in\Z^2}\sum_{l\in\Z^2}\sum_{m\in\Z^2} \mathcal{N}\bigl(f_m, g_{m+k}-g_{m+k}\ast_1\phi_\varepsilon, h_{m+l}\bigr),
\end{equation}
noting that the sums in $k$ and $l$ are both effectively finite, since only finitely many summands are nonzero.
In fact, the integrand
\[ f_m(x,y) \,(g_{m+k}-g_{m+k}\ast_1\phi_\varepsilon)(x+u,y) \,h_{m+l}\Bigl(x,y+\frac{1}{u}\Bigr) \,\zeta(u) \]
is identically zero unless absolute values of both coordinates of $k$ and $l$ are at most $5$, which amounts to having to sum only over $O(1)$ many pairs of indices $(k,l)$. For that reason, it is sufficient to bound the innermost sum in \eqref{eq:T1-localized-sum}, the one in $m$, uniformly in the fixed indices $k$ and $l$.
We next apply Lemma~\ref{lm:smoothing} to each such summand, with the smoothing exponent $\sigma\in(0,1]$ as in the lemma.
Its estimate applies to the functions $f_m$, $g_{m+k}-g_{m+k}\ast_1\phi_\varepsilon$, $h_{m+l}$, all translated by $-m$ (so that the translates become supported in $[-10,10]^2$), giving
\[ \big|\mathcal{N}\bigl(f_m, g_{m+k}-g_{m+k}\ast_1\phi_\varepsilon, h_{m+l}\bigr)\big|
\lesssim \|f_m\|_{\textup{L}^\infty} \|g_{m+k}-g_{m+k}\ast_1\phi_\varepsilon\|_{\textup{H}^{-\sigma,0}} \|h_{m+l}\|_{\textup{H}^{0,-\sigma}}. \]
Here we used the fact that the Sobolev norms remain unchanged by the translation.
Using the Cauchy--Schwarz inequality in $m$, we obtain
\begin{align}
& \Big| \sum_{m\in\Z^2} \mathcal{N}\bigl(f_m, g_{m+k}-g_{m+k}\ast_1\phi_\varepsilon, h_{m+l}\bigr) \Big| \nonumber \\
& \lesssim \|f\|_{\textup{L}^\infty} \Bigl(\sum_{m\in\Z^2} \|g_m-g_m\ast_1\phi_\varepsilon\|_{\textup{H}^{-\sigma,0}}^2\Bigr)^{1/2}
\Bigl(\sum_{m\in\Z^2} \|h_m\|_{\textup{H}^{0,-\sigma}}^2\Bigr)^{1/2}. \label{eq:rhsoftwob}
\end{align}
Since $\widehat\phi(0)=1$ and $\widehat\phi$ is smooth,
\[ \big|1-\widehat\phi(\varepsilon\xi)\big| \lesssim \min\{1,\varepsilon\abs{\xi}\}
\leq \varepsilon^\sigma |\xi|^\sigma \lesssim \varepsilon^\sigma (1+\xi^2)^{\sigma/2}, \]
so
\begin{align*}
\| g_m - g_m\ast_1\phi_\varepsilon \|_{\textup{H}^{-\sigma,0}}^2
& = \int_{\R^2} \big|\widehat{g_m}(\xi,\eta)\big|^2 \big| 1-\widehat\phi(\varepsilon\xi) \big|^2 (1+\xi^2)^{-\sigma} \dd \xi\dd \eta \\
& \lesssim \varepsilon^{2\sigma}\int_{\R^2}\abs{\widehat{g_m}(\xi,\eta)}^2\dd \xi\dd \eta
= \varepsilon^{2\sigma} \|g_m\|_{\textup{L}^2}^2.
\end{align*}
Hence, by \eqref{eq:local-L2-overlap} and $\|h\|_{\textup{H}^{0,-\sigma}}\leq\|h\|_{\textup{L}^2}$, the right hand side in \eqref{eq:rhsoftwob} is
\[ \lesssim \|f\|_{\textup{L}^\infty} \Bigl( \varepsilon^{2\sigma} \sum_{m\in\Z^2} \|g_m\|_{\textup{L}^2}^2 \Bigr)^{1/2}
\Bigl(\sum_{m\in\Z^2} \|h_m\|_{\textup{L}^2}^2\Bigr)^{1/2} 
\lesssim \varepsilon^\sigma \|f\|_{\textup{L}^\infty} \|g\|_{\textup{L}^2} \|h\|_{\textup{L}^2}. \]

The second term, $T_2$, is decomposed in the same way, leading to the study of
\begin{align*}
& \Big| \sum_{m\in\Z^2} \mathcal{N}\bigl(f_m, \,g_{m+k}\ast_1\phi_\varepsilon, \,h_{m+l} - h_{m+l}\ast_2\phi_\varepsilon\bigr) \Big| \\
& \lesssim \|f\|_{\textup{L}^\infty} \Bigl(\sum_{m\in\Z^2} \|g_m\ast_1\phi_\varepsilon\|_{\textup{H}^{-\sigma,0}}^2\Bigr)^{1/2}
\Bigl(\sum_{m\in\Z^2} \|h_m-h_m\ast_2\phi_\varepsilon\|_{\textup{H}^{0,-\sigma}}^2\Bigr)^{1/2} \\
& \lesssim \|f\|_{\textup{L}^\infty} \Bigl( \sum_{m\in\Z^2} \|g_m\|_{\textup{L}^2}^2\Bigr)^{1/2} \Bigl( \varepsilon^{2\sigma} \sum_{m\in\Z^2} \|h_m\|_{\textup{L}^2}^2\Bigr)^{1/2},
\end{align*}
where we used
\[ \| h_m - h_m\ast_2\phi_\varepsilon \|_{\textup{H}^{0,-\sigma}} \lesssim \varepsilon^{\sigma} \|h_m\|_{\textup{L}^2} \]
and \eqref{eq:local-L2-overlap}.
\end{proof}

The key point in the above proof is that the global Estimate \eqref{eq:more_general} was reduced to the local estimate from Lemma~\ref{lm:smoothing} via almost orthogonality \eqref{eq:local-L2-overlap}. We have not seen such a sharp localization argument in previous literature on point configurations, possibly because the target estimate is rarely independent of $\lambda$. In our case this owes to the fact that the anisotropic dilation $(x,y)\mapsto (\lambda x,y/\lambda)$ is area-preserving.

%%%%%

\subsection{Proof completion}

\begin{proof}[Proof of the upper estimate in Theorem~\ref{thm:main}]
Let
\begin{equation}\label{eq:deltaisdensity}
\delta := \frac{\abs{A}}{R^2} \in (0,1]
\end{equation}
and suppose that $A\subseteq[0,R]^2$ does not contain the pattern \eqref{eq:mainpattern} for any $x,y\in\R$, $t>0$.
Fix the exponent $\sigma>0$ from Lemma~\ref{lm:uniform}. 
Choose
\[ \varepsilon := c_0 \delta^{2/\sigma}, \]
where $c_0\in(0,1]$ will be chosen later.
By \eqref{eq:exact_zero} and Lemma~\ref{lm:uniform},
\begin{equation*}
\mathcal{N}^\varepsilon_\lambda(A) = \mathcal{N}^\varepsilon_\lambda(A) - \mathcal{N}^0_\lambda(A)
\lesssim R^2 \delta \varepsilon^{\sigma} \leq \frac{1}{2} c_1 R^2\delta^3 
\end{equation*}
for every $\lambda>0$, provided $c_0$ was chosen sufficiently small, where $c_1>0$ is the constant from Lemma~\ref{lm:structured}, i.e., 
\begin{equation*}
\mathcal{N}^1_\lambda(A)\geq c_1 R^2\delta^3
\end{equation*}
for every $\lambda\in[1/R,R]$.
In particular,
\[ \mathcal{N}^1_\lambda(A) - \mathcal{N}^\varepsilon_\lambda(A) \gtrsim R^2\delta^3. \]
We square this and integrate over $[1/R,R]$ with respect to $\dd\lambda/\lambda$, so that Lemma~\ref{lm:L2error} then gives
\[ R^4 \delta^6 \log R \lesssim \int_{1/R}^{R} \big| \mathcal{N}^\varepsilon_\lambda(A)-\mathcal{N}^1_\lambda(A) \big|^2\,\frac{d\lambda}{\lambda} \lesssim R^4 \delta^2 \log\frac{1}{\varepsilon}. \]
By the choice of $\varepsilon$ we have obtained
\[ \delta^4 \log R \lesssim 1+\log\frac{1}{\delta}, \]
i.e.,
\[ \frac{1}{\delta^4}\Bigl(1+\log\frac{1}{\delta}\Bigr) \gtrsim \log R. \]
This implies\footnote{If $x^4 (1+\log x) = y$ is solved for $x>0$ when $y>0$ is sufficiently large, then $x=((4+o(1))y/\log y)^{1/4}$, $y\to+\infty$.}
\[ \frac{1}{\delta} \gtrsim \Bigl( \frac{\log R}{\log\log R} \Bigr)^{1/4}, \]
which proves the more difficult half of Theorem~\ref{thm:main}.
\end{proof}

\begin{remark}
Note that Lemmas~\ref{lm:L2error} and \ref{lm:uniform} did not use any information about the localization of the set $A$. For instance, the proof of latter lemma owes this to the fact that transformations $(x,y)\mapsto(\lambda x,y/\lambda)$ are area-preserving, so one can in fact turn attention to a single value of $\lambda$ at a time, and all underlying estimates are exactly the same. An ultimate consequence is that we are able to obtain a better upper bound \eqref{eq:mainestimate} than the doubly-logarithmic bounds in \cite{B88nonlinear} and \cite{CDR21}. The only place where the concentration of $A$ on $[0,R]^2$ was, in fact, needed is the lower bound in Lemma~\ref{lm:structured}. 
\end{remark}

%%%%%

\section{Proof of the lower bound in Theorem~\ref{thm:main}}
\label{sec:lower_bound}

\begin{proof}[Proof of the lower estimate in Theorem~\ref{thm:main}]
For $R\geq 4$, let $m=\lfloor R/4\rfloor$ and define
\[ S_j := \Big\{ (x,y) \in [0,R]^2 \,:\, R-4j \leq x+y \leq R-4j+\frac{1}{8j} \Big\} \]
for $j=1,2,\ldots,m$.
Consider the set
\[ A_R := \bigcup_{j=1}^m S_j. \]
Thus, $A_R$ is a union of thin antidiagonal bands $S_j$, with the $j$th band having width $1/(8j)$ in the $x+y$ coordinate. It is illustrated in Figure~\ref{fig:exampleset}.

\begin{figure}
\centering
\begin{tikzpicture}[scale=0.3, >=stealth]
  \def\R{18}
  \pgfmathtruncatemacro{\m}{\R/4}
  \def\ExaggerationFactor{10} 
  \draw[->, thick, black] (-0.5, 0) -- (\R+1.5, 0) node[right, black] {$x$};
  \draw[->, thick, black] (0, -0.5) -- (0, \R+1.5) node[above, black] {$y$};
  \node[below left] at (0,0) {$0$};
  \begin{scope}
    \clip (0,0) rectangle (\R,\R);
    \foreach \j in {1,...,\m} {
      \pgfmathsetmacro{\cmin}{\R - 4*\j}
      \pgfmathsetmacro{\cmax}{\R - 4*\j + \ExaggerationFactor/(8*\j)}
      \fill[black!20] (\cmin, 0) -- (\cmax, 0) -- (0, \cmax) -- (0, \cmin) -- cycle;
      \draw[black, thick] (\cmin, 0) -- (0, \cmin);
      \draw[black, thick] (\cmax, 0) -- (0, \cmax);
      \pgfmathsetmacro{\midpoint}{(\cmin + \cmax)/2}
      \node[black, fill=white, inner sep=1pt, circle] 
        at (\midpoint/2, \midpoint/2) {\footnotesize $S_{\j}$};
    }
  \end{scope}
  \draw[thick] (0,0) rectangle (\R,\R);
  \draw[thick] (\R, 0.15) -- (\R, -0.15) node[below] {$R$};
  \draw[thick] (0.15, \R) -- (-0.15, \R) node[left] {$R$};
\end{tikzpicture}
  \caption{Example for the lower bound.}
  \label{fig:exampleset}
\end{figure}

On the one hand, since a band
\[ \{(x,y) \in [0,R]^2 : a \leq x+y \leq b\} \]
has area $(b^2-a^2)/2$ for any $0\leq a<b\leq R$, the whole set $A_R$ has measure
\begin{align*}
|A_R| & = \frac{1}{2}\sum_{j=1}^m \biggl( \Bigl(R-4j+\frac{1}{8j}\Bigr)^2 - (R-4j)^2 \biggr) \\
& = \frac{R}{8} \sum_{j=1}^m \frac{1}{j} - \frac{m}{2} + \frac{1}{128}\sum_{j=1}^m \frac{1}{j^2}
= \frac{1}{8} R \log R + O(R). 
\end{align*}
On the other hand, $A_R$ contains no triple of the form \eqref{eq:mainpattern}.
Namely, if it did contain such a triple for some $x,y\in\R$ and $t>0$, then we could find an index $1\leq j\leq m$ such that $(x,y)\in S_j$.
\begin{itemize}
\item If the points $(x+t,y)$ and $(x,y+1/t)$ also belonged to the same band $S_j$, we would arrive at a contradiction with $t\leq 1/(8j)$ and $1/t\leq 1/(8j)$.
\item If both points $(x+t,y)$ and $(x,y+1/t)$ rather belonged to $\cup_{k=1}^{j-1}S_k$, then the mutual separation of bands would lead to both $t>3$ and $1/t>3$, which is again a contradiction.
\item Finally, if $(x+t,y)\in S_j$ and $(x,y+1/t)\in \cup_{k=1}^{j-1}S_k$, then $t\leq 1/(8j)$ and $1/t<4j$, which is impossible again.
The same argument also applies when we interchange the roles of the two points and replace $t$ with $1/t$. 
\end{itemize}
This completes the proof of the lower estimate in \eqref{eq:mainestimate}.
\end{proof}

Let us remark that the argument above proves a lower bound for the upward-oriented triangles, i.e., when one restricts attention to positive $t$. If one also forbids triples with $t<0$, then the same construction does not apply, because it only controls ``forward differences'' in the $x+y$ coordinate.

%%%%%

\section{Proof of Theorem~\ref{thm:triangles}}

Let $\mathcal{R}_\theta$ denote the planar rotation by the angle $\theta$:
\[ \mathcal{R}_\theta(x,y) := (x\cos\theta-y\sin\theta, x\sin\theta+y\cos\theta). \]
For a bounded measurable set $A\subseteq\R^2$ we define the horizontal counting form
\[ \overrightarrow{\mathcal{M}}^0_{\lambda}(A) 
:= \int_0^{\infty}\!\int_{\R^3} \1_A(x,y) \1_A(x+\lambda u,y) \1_A\Bigl(x',y+\frac{1}{\lambda u}\Bigr)
\zeta(u) \dd x \dd x' \dd y \dd u \]
and its rotated directional variant
\[ \mathcal{M}^0_{\lambda}(A) := \frac{1}{2\pi} \int_0^{2\pi} \overrightarrow{\mathcal{M}}^0_{\lambda}(\mathcal{R}_{\theta}A) \dd\theta. \]
The corresponding smoothed versions of these forms are
\begin{align*} 
\overrightarrow{\mathcal{M}}^{\varepsilon}_{\lambda}(A) & := 
\int_0^{\infty}\!\int_{\R^3} \1_A(x,y) (\1_A\ast_1\phi_{\lambda\varepsilon})(x+\lambda u,y) (\1_A\ast_2\phi_{\lambda^{-1}\varepsilon})\Bigl(x',y+\frac{1}{\lambda u}\Bigr)
\zeta(u) \dd x \dd x' \dd y \dd u, \\
\mathcal{M}^{\varepsilon}_{\lambda}(A) & := \frac{1}{2\pi} \int_0^{2\pi} \overrightarrow{\mathcal{M}}^{\varepsilon}_{\lambda}(\mathcal{R}_{\theta}A) \dd\theta .
\end{align*}
Here, $\lambda\in(0,\infty)$ and $\varepsilon\in(0,1]$, just as before.

The following sections roughly mirror the technical parts of the proof for corners, but we still need to work out the details, since numerous quantitative savings are possible for general rotated triangles. Most notably, the Riesz energy of a set $A$ comes into play, besides its measure, as an additional refinement of the argument. 

%%%%%

\subsection{The structured part}

\begin{lemma}\label{lm:structured2}
For every measurable $A\subseteq[0,R]^2$ and every $\lambda$ with
\[ R\geq \max\{\lambda,\lambda^{-1}\}, \]
one has
\[ \mathcal{M}^1_\lambda(A) \gtrsim \frac{|A|^3}{R^3} = R^3 \Bigl(\frac{|A|}{R^2}\Bigr)^3. \]
\end{lemma}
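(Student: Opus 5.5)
The plan is to fix a rotation angle $\theta$ and prove the bound $\overrightarrow{\mathcal{M}}^1_\lambda(B)\gtrsim |A|^3/R^3$ uniformly in $\theta$ for $B:=\mathcal{R}_\theta A$. Averaging over $\theta$ then gives the lemma. Since $B$ is contained in a disk of radius $\sim R$, it lies in an axis-parallel square $Q_0$ of side $\sim R$, and $|B|=|A|$. The only information we use is this localization, so the argument is the one from Lemma~\ref{lm:structured}, with one change: the third point now has a free horizontal coordinate $x'$. That extra integration is the source of the gain of one power of $R$.

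First, exactly as in the proof of Lemma~\ref{lm:structured}, we unfold the convolutions. This writes $\overrightarrow{\mathcal{M}}^1_\lambda(B)$ as
\[ \int \1_B(x,y)\1_B(x',y)\1_B(x'',y')\,K^+_\lambda(x-x',y-y')\dd x\dd x'\dd x''\dd y\dd y', \]
with the same kernel \eqref{eq:Kplus}. Here the free variable $x''$ has absorbed the horizontal translation. Using \eqref{eq:Kplusbd}, we bound $K^+_\lambda$ from below by the indicator of $[-\lambda,\lambda]\times[-\lambda^{-1},\lambda^{-1}]$. Put $H(y):=\int_\R \1_B(s,y)\dd s$ for the full row length. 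Next, partition $Q_0$ into intervals $I$ of length $\le\lambda$ in $x$ and intervals $J$ of length $\le\lambda^{-1}$ in $y$. By the hypothesis $R\ge\max\{\lambda,\lambda^{-1}\}$, there are $\lesssim R/\lambda$ intervals $I$ and $\lesssim R\lambda$ intervals $J$. We then obtain
\[ \overrightarrow{\mathcal{M}}^1_\lambda(B)\gtrsim \sum_J \Bigl(\int_J H\Bigr)\int_J \sum_I\Bigl(\int_I \1_B(x,y)\dd x\Bigr)^2\dd y . \]

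Now we apply Cauchy--Schwarz twice and Jensen once. First, over the $\lesssim R/\lambda$ intervals $I$ we have $\sum_I(\int_I\1_B(\cdot,y))^2\gtrsim (\lambda/R)H(y)^2$. Second, on each $J$ we have $\int_J H^2\ge (\int_J H)^2/|J|\ge\lambda(\int_J H)^2$. Together these give $\overrightarrow{\mathcal{M}}^1_\lambda(B)\gtrsim(\lambda^2/R)\sum_J(\int_J H)^3$. Finally, Jensen over the $\lesssim R\lambda$ intervals $J$, together with $\sum_J\int_J H=|A|$, yields $\sum_J(\int_JH)^3\gtrsim |A|^3/(R\lambda)^2$. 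Hence $\overrightarrow{\mathcal{M}}^1_\lambda(B)\gtrsim|A|^3/R^3$, and the powers of $\lambda$ cancel exactly.

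The step requiring the most care is the first one: checking that the kernel representation and the lower bound \eqref{eq:Kplusbd} survive when the third function has a free first coordinate and is smoothed only in the second variable. This should be a direct substitution argument. Beyond that, the remaining point is to track the bookkeeping of interval counts near the boundary of $Q_0$. This is exactly where $R\ge\max\{\lambda,\lambda^{-1}\}$ is used, ensuring that both counts are $\gtrsim1$ and comparable to $R/\lambda$ and $R\lambda$.
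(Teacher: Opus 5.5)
Your proposal is correct and follows essentially the same route as the paper: you reduce to the horizontal form $\overrightarrow{\mathcal{M}}^1_\lambda$ of the rotated set, unfold it into the kernel $K^+_\lambda$ with the third point's first coordinate free, and bound $K^+_\lambda$ below on boxes of size $\lambda\times\lambda^{-1}$. You then apply Cauchy--Schwarz over the $\lesssim R/\lambda$ intervals $I$, Cauchy--Schwarz on each $J$ (using $|J|\le\lambda^{-1}$), and Jensen over the $\lesssim R\lambda$ intervals $J$, which is exactly the paper's chain; the only differences (your labelling of the free variable, and partitioning the enclosing square of the rotated set directly) are cosmetic.
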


\begin{proof}
It is sufficient to prove
\[ \overrightarrow{\mathcal{M}}^1_{\lambda}(A) \gtrsim \frac{|A|^3}{R^3}. \]
Afterwards, one simply applies this bound to the set $A$ rotated by angle $\theta$ which then still lies in an axes-aligned square with at most $\sqrt{2}$ times longer side.

By the form definition,
\[ \overrightarrow{\mathcal{M}}^1_\lambda(A) = \int_{\R^5} \1_A(x,y) \1_A(x'',y) \1_A(x',y') K^+_\lambda(x-x'',y-y') \dd x \dd x' \dd x'' \dd y \dd y', \]
where $K^+_\lambda$ is the same positive kernel from \eqref{eq:Kplus}.
Split $[0,R]$ into a collection $\mathcal{I}$ of $\sim R/\lambda$ many intervals $I$ of length $|I|\leq\lambda$. At the same time, partition $[0,R]$ into another collection $\mathcal{J}$ of $\sim \lambda R$ many intervals $J$ of length $|J|\leq 1/\lambda$.
Using \eqref{eq:Kplusbd} we bound
{\allowdisplaybreaks
\begin{align*} 
\overrightarrow{\mathcal{M}}^1_{\lambda}(A) 
& \gtrsim \sum_{\substack{I\in\mathcal{I}\\J\in\mathcal{J}}} \int_{I\times I\times [0,R]\times J\times J} \1_A(x,y) \1_A(x'',y) \1_A(x',y') \dd x \dd x'' \dd x' \dd y \dd y' \\
& = \sum_{J\in\mathcal{J}} |A\cap ([0,R]\times J)| \int_J \sum_{I\in\mathcal{I}} \Bigl( \int_I \1_A(x,y) \dd x \Bigr)^2 \dd y \\
& \gtrsim \frac{\lambda}{R} \sum_{J\in\mathcal{J}} |A\cap ([0,R]\times J)| \int_J \Bigl( \int_{[0,R]} \1_A(x,y) \dd x \Bigr)^2 \dd y \\
& \gtrsim \frac{\lambda^2}{R} \sum_{J\in\mathcal{J}} |A\cap ([0,R]\times J)|^3 \geq \frac{|A|^3}{R^3}.
\end{align*}
}
Here we used the Cauchy--Schwarz inequality followed by discrete Jensen's inequality.
\end{proof}

%%%%%

\subsection{The error part}

Let
\begin{equation}\label{eq:def_Riesz_en}
\mathcal{E}(A) := \int_{(\R^2)^2} \frac{\1_A(z) \1_A(z')}{|z-z'|} \dd z \dd z'
\end{equation}
denote the \emph{Riesz energy} of the set $A$.

\begin{lemma}\label{lm:L2error2}
For every bounded measurable $A\subseteq\R^2$ and every $\varepsilon\in(0,1]$,
\[ \Bigl( \int_0^{\infty} \abs{\mathcal{M}^\varepsilon_\lambda(A)-\mathcal{M}^1_\lambda(A)}^2 \,\frac{\dd\lambda}{\lambda} \Bigr)^{1/2}
\lesssim \mathcal{E}(A) \Bigl(\log\frac{1}{\varepsilon}\Bigr)^{1/2}. \]
\end{lemma}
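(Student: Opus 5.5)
The plan is to follow the proof of Lemma~\ref{lm:L2error}, but to exploit the fact that the third function is integrated in $x'$. This integration turns it into a one-variable projection, and averages of squared $\textup{L}^2$ norms of projections over $\theta$ are governed by the Riesz energy.

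\emph{Setup.} Fix $\theta$ and write $B:=\mathcal{R}_\theta A$. Define the projection $P_\theta(y):=\int_{\R}\1_B(x',y)\dd x'$, so that $\int_{\R}(\1_B\ast_2\phi_{\lambda^{-1}\varepsilon})(x',y)\dd x'=(P_\theta\ast\phi_{\lambda^{-1}\varepsilon})(y)$. Thus $\overrightarrow{\mathcal{M}}^{\varepsilon}_\lambda(B)$ equals
\[ \int_0^\infty\!\int_{\R} H^{\varepsilon}_{\lambda,u}(y)\,(P_\theta\ast\phi_{\lambda^{-1}\varepsilon})\Bigl(y+\frac{1}{\lambda u}\Bigr)\zeta(u)\dd y\dd u, \]
where $H^{\varepsilon}_{\lambda,u}(y):=\int_{\R}\1_B(x,y)(\1_B\ast_1\phi_{\lambda\varepsilon})(x+\lambda u,y)\dd x$. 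Note that $0\le H^{\varepsilon}_{\lambda,u}\le P_\theta$.

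\emph{Splitting.} As before, I split $\overrightarrow{\mathcal{M}}^{\varepsilon}_\lambda(B)-\overrightarrow{\mathcal{M}}^{1}_\lambda(B)=E_1+E_2$, where $E_1$ carries $\1_B\ast_1(\phi_{\lambda\varepsilon}-\phi_\lambda)$ and $E_2$ carries $P_\theta\ast(\phi_{\lambda^{-1}\varepsilon}-\phi_{\lambda^{-1}})$.

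\emph{The term $E_2$.} Using $0\le H\le P_\theta$ and Cauchy--Schwarz in $y$ (for each $u$, then integrating against $\zeta$), I get
\[ |E_2|\lesssim\|P_\theta\|_{\textup{L}^2}\,\|P_\theta\ast(\phi_{\lambda^{-1}\varepsilon}-\phi_{\lambda^{-1}})\|_{\textup{L}^2}. \]

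\emph{The term $E_1$.} Here the roles are reversed. I bound $|E_1|\lesssim \sup_{u}\|D_{\lambda,u}\|_{\textup{L}^2}\|P_\theta\|_{\textup{L}^2}$, where
\[ D_{\lambda,u}(y):=\int_{\R}\1_B(x,y)\bigl(\1_B\ast_1(\phi_{\lambda\varepsilon}-\phi_\lambda)\bigr)(x+\lambda u,y)\dd x, \]
using that convolution with $\phi_{\lambda^{-1}\varepsilon}$ is a contraction. For fixed $y$ and $u$, Cauchy--Schwarz in $x$ gives
\[ |D_{\lambda,u}(y)|^2\le P_\theta(y)\,\|\1_B(\cdot,y)\ast(\phi_{\lambda\varepsilon}-\phi_\lambda)\|_{\textup{L}^2(\R)}^2. \]
Integrating in $\dd\lambda/\lambda$ and using Plancherel together with \eqref{eq:thephitrick} in the single variable $x$ then yields
\[ \int_0^\infty |D_{\lambda,u}(y)|^2\frac{\dd\lambda}{\lambda}\lesssim \log\frac1\varepsilon\,P_\theta(y)^2. \]
The supremum over $u$ is harmless: $u\in[1/2,2]$ only enters as a translation, which does not change the $\textup{L}^2$ norm, so I will keep the $u$-integral outside by Minkowski's inequality. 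Integrating in $y$ gives
\[ \int_0^\infty\|D_\lambda\|_{\textup{L}^2}^2\,\frac{\dd\lambda}{\lambda}\lesssim\log\frac1\varepsilon\,\|P_\theta\|_{\textup{L}^2}^2. \]
For $E_2$, the same one-dimensional computation as in Lemma~\ref{lm:L2error} gives
\[ \int_0^\infty\|P_\theta\ast(\phi_{\lambda^{-1}\varepsilon}-\phi_{\lambda^{-1}})\|_{\textup{L}^2}^2\,\frac{\dd\lambda}{\lambda}\lesssim\log\frac1\varepsilon\,\|P_\theta\|_{\textup{L}^2}^2. \]

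\emph{Averaging in $\theta$.} Both terms now have the form $a_\theta b_\theta(\lambda)$ with $a_\theta=\|P_\theta\|_{\textup{L}^2}$. By Cauchy--Schwarz in $\theta$,
\[ \bigl|\mathcal{M}^\varepsilon_\lambda(A)-\mathcal{M}^1_\lambda(A)\bigr|^2\lesssim\Bigl(\frac1{2\pi}\int_0^{2\pi} a_\theta^2\dd\theta\Bigr)\Bigl(\frac1{2\pi}\int_0^{2\pi} b_\theta(\lambda)^2\dd\theta\Bigr). \]
Integrating in $\dd\lambda/\lambda$ then gives the bound $\log(1/\varepsilon)\bigl(\frac{1}{2\pi}\int_0^{2\pi}\|P_\theta\|_{\textup{L}^2}^2\dd\theta\bigr)^2$.

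\emph{The key identity.} It remains to show that
\[ \frac1{2\pi}\int_0^{2\pi}\|P_\theta\|_{\textup{L}^2}^2\dd\theta\sim\mathcal{E}(A). \]
By the projection-slice theorem, $\widehat{P_\theta}(\rho)=\widehat{\1_A}(\rho\,\omega_\theta)$ for the appropriate unit vector $\omega_\theta$. Then Plancherel and polar coordinates give
\[ \frac1{2\pi}\int_0^{2\pi}\|P_\theta\|_{\textup{L}^2}^2\dd\theta=\frac{1}{2\pi}\int_{\R^2}\frac{|\widehat{\1_A}(\zeta)|^2}{|\zeta|}\dd\zeta, \]
the factor coming from $\rho\in\R$ covering each direction twice. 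Since the Fourier transform of $|z|^{-1}$ on $\R^2$ is $|\zeta|^{-1}$, this equals $c\,\mathcal{E}(A)$.

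The step I expect to need the most care is $E_1$. A naive Cauchy--Schwarz that keeps $P_\theta$ inside with the wrong weight produces a factor $\|P_\theta\|_{\textup{L}^\infty}\sim R$, which would destroy the estimate. The fix is to integrate in $\lambda$ before integrating in $y$, so that the square-function bound is applied fibrewise in $x$ and produces exactly $P_\theta(y)^2$.
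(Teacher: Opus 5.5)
Your proof is correct and follows the paper's argument essentially step for step. You use the same split into $E_1,E_2$ and the same fibrewise Cauchy--Schwarz in $x$, which gives $\|P_\theta\|_{\textup{L}^2}^4\log(1/\varepsilon)$ for the horizontal form; your Cauchy--Schwarz in $\theta$ is an equivalent substitute for the paper's Minkowski averaging. The only real difference is the identity $\frac{1}{2\pi}\int_0^{2\pi}\|P_\theta\|_{\textup{L}^2}^2\dd\theta=\frac{1}{\pi}\mathcal{E}(A)$: the paper obtains it by an explicit change of variables with Jacobian $|x-x'|$, while you use the projection-slice theorem and $\widehat{|z|^{-1}}=|\zeta|^{-1}$, which is the backprojection route the paper mentions as an alternative. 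In your version the constant should be $\frac{1}{\pi}$ rather than $\frac{1}{2\pi}$, which is immaterial.
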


\begin{proof}
The first part of the proof estimates the ``horizontal'' differences $\overrightarrow{\mathcal{M}}^\varepsilon_\lambda(A)-\overrightarrow{\mathcal{M}}^1_\lambda(A)$.
For shortness we introduce the measures of the horizontal sections of $A$: 
\[ G(y) := \int_{\R} \1_A(x,y) \dd x. \]
Proceeding as in the proof of Lemma~\ref{lm:L2error} we split
\[ \overrightarrow{\mathcal{M}}^\varepsilon_\lambda(A)-\overrightarrow{\mathcal{M}}^1_\lambda(A) = \overrightarrow{E}_{1}(\lambda,\varepsilon) + \overrightarrow{E}_{2}(\lambda,\varepsilon), \]
where
\begin{align*}
\overrightarrow{E}_{1}(\lambda,\varepsilon) 
&:= \int_0^{\infty}\!\int_{\R^2}
\1_{A}(x,y) \Bigl(\1_{A}\ast_1(\phi_{\lambda\varepsilon}-\phi_{\lambda})\Bigr)(x+\lambda u,y) \\
& \qquad\qquad\qquad \times (G\ast\phi_{\lambda^{-1}\varepsilon})\Bigl(y+\frac{1}{\lambda u}\Bigr) \zeta(u) \dd x \dd y \dd u, \\
\overrightarrow{E}_{2}(\lambda,\varepsilon)
&:= \int_0^{\infty}\!\int_{\R^2}
\1_{A}(x,y) (\1_{A}\ast_1\phi_{\lambda})(x+\lambda u,y) \\
& \qquad\qquad\qquad \times \Bigl(G\ast(\phi_{\lambda^{-1}\varepsilon} -\phi_{\lambda^{-1}})\Bigr)\Bigl(y+\frac{1}{\lambda u}\Bigr) \zeta(u) \dd x \dd y \dd u.
\end{align*}
We first address $\overrightarrow{E}_{2}$ estimating pointwise
\begin{align*} 
\big|\overrightarrow{E}_{2}(\lambda,\varepsilon)\big| 
& \leq \int_0^{\infty} \int_{\R} \Bigl( \underbrace{\int_{\R} \1_A(x,y) \dd x}_{G(y)} \Bigr) \Big| \Bigl(G\ast(\phi_{\lambda^{-1}\varepsilon} -\phi_{\lambda^{-1}})\Bigr)\Bigl(y+\frac{1}{\lambda u}\Bigr) \Big| \dd y \,\zeta(u) \dd u \\
& \lesssim \|G\|_{\textup{L}^2} \|G\ast(\phi_{\lambda^{-1}\varepsilon} -\phi_{\lambda^{-1}})\|_{\textup{L}^2} 
\end{align*}
and substituting $s=\lambda^{-1}$ to get
\begin{equation}\label{eq:Ev2}
\int_0^{\infty}\big|\overrightarrow{E}_2(\lambda,\varepsilon)\big|^2\,\frac{\dd\lambda}{\lambda}  
\lesssim \|G\|_{\textup{L}^2}^2 \int_0^{\infty} \big\|G\ast(\phi_{\varepsilon s} -\phi_{s})\big\|_{\textup{L}^2}^2 \frac{\dd s}{s} \lesssim \|G\|_{\textup{L}^2}^4 \log\frac{1}{\varepsilon}
\end{equation}
as a consequence of \eqref{eq:thephitrick}.
Next, we turn to $\overrightarrow{E}_{1}$. From
{\allowdisplaybreaks\begin{align*}
\big|\overrightarrow{E}_{1}(\lambda,\varepsilon)\big|
& \leq \int_0^{\infty} \int_{\R}
\Big| \int_{\R} \1_{A}(x,y) \Bigl(\1_{A}\ast_1(\phi_{\lambda\varepsilon}-\phi_{\lambda})\Bigr)(x+\lambda u,y) \dd x \Big| \\
& \qquad\qquad\qquad \times \Big| (G\ast\phi_{\lambda^{-1}\varepsilon})\Bigl(y+\frac{1}{\lambda u}\Bigr) \Big| \dd y \,\zeta(u) \dd u \\
& \leq \int_0^{\infty} \biggl( \int_{\R} \Big| \int_{\R} \1_{A}(x,y) \Bigl(\1_{A}\ast_1(\phi_{\lambda\varepsilon}-\phi_{\lambda})\Bigr)(x+\lambda u,y) \dd x \Big|^2 \dd y \biggr)^{1/2}\\
& \qquad\qquad \times \| G\ast\phi_{\lambda^{-1}\varepsilon} \|_{\textup{L}^2} \,\zeta(u) \dd u \\
& \lesssim \biggl( \int_{\R} \underbrace{\|\1_{A}(\cdot,y)\|_{\textup{L}^2}^2}_{G(y)} \big\| \Bigl(\1_{A}\ast_1(\phi_{\lambda\varepsilon}-\phi_{\lambda})\Bigr)(\cdot,y) \big\|_{\textup{L}^2}^2 \dd y \biggr)^{1/2} \|G\|_{\textup{L}^2}
\end{align*}}
we have
\begin{align}
\int_0^{\infty}\big|\overrightarrow{E}_1(\lambda,\varepsilon)\big|^2\,\frac{\dd\lambda}{\lambda}
& \lesssim \|G\|_{\textup{L}^2}^2 \int_{\R} G(y) \Bigl( \int_0^{\infty} \big\| \Bigl(\1_{A}\ast_1(\phi_{\lambda\varepsilon}-\phi_{\lambda})\Bigr)(\cdot,y) \big\|_{\textup{L}^2}^2 \frac{\dd\lambda}{\lambda}\Bigr) \dd y \nonumber \\
& \lesssim \|G\|_{\textup{L}^2}^2 \Bigl(\log\frac{1}{\varepsilon}\Bigr) \int_{\R} G(y) \,\| \1_{A}(\cdot,y) \|_{\textup{L}^2}^2 \dd y = \|G\|_{\textup{L}^2}^4 \log\frac{1}{\varepsilon}. \label{eq:Ev1}
\end{align}
Combining \eqref{eq:Ev2} and \eqref{eq:Ev1} we conclude
\begin{equation}\label{eq:notrotated}
\Bigl( \int_0^{\infty} \big|\overrightarrow{\mathcal{M}}^\varepsilon_\lambda(A)-\overrightarrow{\mathcal{M}}^1_\lambda(A)\big|^2 \,\frac{\dd\lambda}{\lambda} \Bigr)^{1/2} \lesssim \Bigl(\log\frac{1}{\varepsilon}\Bigr)^{1/2} \int_{\R} \Big| \int_{\R} \1_A(x,y) \dd x \Big|^2 \dd y.
\end{equation}

In the second part of the proof, we apply Estimate \eqref{eq:notrotated} to the set $A$ rotated by angle $\theta$ and average these over all $\theta\in[0,2\pi)$. That way we obtain
\begin{align*} 
& \Bigl( \int_0^{\infty} \big|\mathcal{M}^\varepsilon_\lambda(A)-\mathcal{M}^1_\lambda(A)\big|^2 \,\frac{\dd\lambda}{\lambda} \Bigr)^{1/2} \\
& \lesssim \Bigl(\log\frac{1}{\varepsilon}\Bigr)^{1/2} \frac{1}{2\pi} \int_0^{2\pi} \!\int_{\R} \Bigl( \int_{\R} \1_A(x\cos\theta+y\sin\theta, -x\sin\theta+y\cos\theta) \dd x \Bigr)^2 \dd y \dd \theta. 
\end{align*}
Now, we either use the so-called backprojection identity for the X-ray transform \cite{Helgason11}, or simply expand the right-hand side,
\begin{align*}
\Bigl(\log\frac{1}{\varepsilon}\Bigr)^{1/2} \frac{1}{2\pi} \int_0^{2\pi} \!\int_{\R} \int_{\R} \int_{\R} & \1_A(x\cos\theta+y\sin\theta, -x\sin\theta+y\cos\theta) \\
& \1_A(x'\cos\theta+y\sin\theta, -x'\sin\theta+y\cos\theta) \dd x \dd x' \dd y \dd \theta,
\end{align*}
and substitute 
\begin{align*}
& z_1 = x\cos\theta+y\sin\theta, \quad
z_2 = -x\sin\theta+y\cos\theta, \\
& z'_1 = x'\cos\theta+y\sin\theta, \quad
z'_2 = -x'\sin\theta+y\cos\theta, \\
& \implies \Big| \det \frac{\partial(z_1,z_2,z'_1,z'_2)}{\partial(\theta,x,x',y)} \Big| = |x - x'| = \Bigl((z_1-z'_1)^2+(z_2-z'_2)^2\Bigr)^{1/2},
\end{align*}
to turn it into
\[ \Bigl(\log\frac{1}{\varepsilon}\Bigr)^{1/2} \frac{1}{\pi} \int_{(\R^2)^2} \frac{\1_A(z_1,z_2)\1_A(z'_1,z'_2)}{((z_1-z'_1)^2+(z_2-z'_2)^2)^{1/2}} \dd z_1 \dd z_2 \dd z'_1 \dd z'_2. \qedhere \]
\end{proof}

It is interesting to observe that our proof of Lemma~\ref{lm:L2error2} used essentially that we are working with indicator functions. Otherwise we would not be able to equate the $\textup{L}^1$ norm $\|\1_A(\cdot,y)\|_{\textup{L}^1}$ with the square of the $\textup{L}^2$ norm $\|\1_A(\cdot,y)\|_{\textup{L}^2}^2$. 

%%%%%

\subsection{The uniform part}

\begin{lemma}\label{lm:uniform2}
There exists $\sigma>0$ such that, for every measurable set $A\subseteq[0,R]^2$, every $\lambda>0$, and every $0<\varepsilon\leq 1$,
\[ \big|\mathcal{M}^0_\lambda(A)-\mathcal{M}^\varepsilon_\lambda(A)\big| 
\lesssim \varepsilon^\sigma R |A| = \varepsilon^\sigma R^3 \frac{|A|}{R^2}. \]
\end{lemma}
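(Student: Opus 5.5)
It suffices to prove the bound for the horizontal forms $\overrightarrow{\mathcal{M}}^0_\lambda$ and $\overrightarrow{\mathcal{M}}^\varepsilon_\lambda$ applied to an arbitrary rotated copy $\mathcal{R}_\theta A$, uniformly in $\theta$, and then average over $\theta\in[0,2\pi)$. A rotated copy of $A$ still lies in an axis-aligned square of side $\sqrt{2}R$ and has the same measure. So we fix a measurable set $A$ contained in a square of side $\lesssim R$ and aim for
\[ \big|\overrightarrow{\mathcal{M}}^0_\lambda(A)-\overrightarrow{\mathcal{M}}^\varepsilon_\lambda(A)\big| \lesssim \varepsilon^\sigma R|A|. \]

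The key observation is that the third function in $\overrightarrow{\mathcal{M}}$ does not depend on $x$. Indeed, integrating out $x'$ replaces $\1_A(x',\cdot)$ by the section-measure function $G(y)=\int_\R \1_A(x',y)\dd x'$, and the smoothed version becomes $G\ast\phi_{\lambda^{-1}\varepsilon}$. Define $h(x,y):=G(y)\1_{[0,R]}(x)$, or more precisely $h$ equal to $G(y)$ on the projection of the support. Since $A$ lies in a box of side $\lesssim R$, the first argument $(x,y)$ of the form forces $x$ to lie in an interval of length $\lesssim R$. Hence
\[ \overrightarrow{\mathcal{M}}^0_\lambda(A)=\mathcal{N}^0_\lambda(\1_A,\1_A,h),\qquad \overrightarrow{\mathcal{M}}^\varepsilon_\lambda(A)=\mathcal{N}^0_\lambda\bigl(\1_A,\1_A\ast_1\phi_{\lambda\varepsilon},h\ast_2\phi_{\lambda^{-1}\varepsilon}\bigr), \]
where the convolution in $y$ commutes with the product structure. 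To be careful, one takes the cutoff $\1_{I}(x)$ over a slightly enlarged interval $I$ of length $\sim R$ containing the $x$-projection of $A$. Then $h(x,y)=G(y)\1_I(x)$ exactly on the region where $\1_A(x,y)\neq 0$, and smoothing in the second variable does not interact with the $x$-cutoff.

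Now rescale anisotropically as in the proof of Lemma~\ref{lm:uniform}, setting $f(x,y)=\1_A(\lambda x,y/\lambda)$ and $\tilde h(x,y)=h(\lambda x,y/\lambda)$. This turns the difference into exactly the left-hand side of \eqref{eq:more_general}, with $g=f$ and $h$ replaced by $\tilde h$. Estimate \eqref{eq:more_general} gives the bound
\[ \lesssim \varepsilon^\sigma\|f\|_{\textup{L}^\infty}\|f\|_{\textup{L}^2}\|\tilde h\|_{\textup{L}^2}=\varepsilon^\sigma|A|^{1/2}\|h\|_{\textup{L}^2}. \]
It remains to bound $\|h\|_{\textup{L}^2}^2=|I|\int G(y)^2\dd y$. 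Since $G(y)\le |I|\lesssim R$ and $\int G=|A|$, we get $\|h\|_{\textup{L}^2}^2\lesssim R\cdot R|A|$, that is, $\|h\|_{\textup{L}^2}\lesssim R|A|^{1/2}$. The whole difference is therefore $\lesssim \varepsilon^\sigma R|A|$, as claimed. Averaging over $\theta$ finishes the proof.

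The main point requiring care is the reduction of the $x'$-integral to a genuine two-variable function $h$ with controlled $\textup{L}^2$ norm. Estimate \eqref{eq:more_general} needs $h\in\textup{L}^2$, but $G(y)$ alone is constant in $x$ and not square-integrable on $\R^2$. So the cutoff in $x$ has to be inserted using the localization of the first function, and one has to check that it commutes with $\ast_2$. Both hold because the cutoff depends only on $x$ and the first factor $\1_A(x,y)$ already vanishes off $I$. The dependence on $R$ enters only through this cutoff length, which explains the factor $R$ in the statement.
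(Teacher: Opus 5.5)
Your proposal is correct and follows essentially the paper's proof. The paper likewise reduces to the horizontal forms for rotated copies and rewrites the $x'$-integrated third factor as a two-variable function, namely the normalized $g=\1_{[0,R]}(x)\1_{[0,R]}(y)\,G(y)/R$ with $\overrightarrow{\mathcal{M}}^{0,\varepsilon}_\lambda(A)=R\,\mathcal{N}^{0,\varepsilon}_\lambda(\1_A,\1_A,g)$, and then invokes \eqref{eq:more_general}. Your unnormalized $h=\1_I(x)G(y)$ with $\|h\|_{\textup{L}^2}\lesssim R|A|^{1/2}$ is the same bookkeeping, and you spell out the $\textup{L}^2$ bound and the compatibility of the $x$-cutoff with $\ast_2$, both of which the paper leaves implicit.
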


\begin{proof}
Just as in the proof of Lemma~\ref{lm:structured2}, it is sufficient to prove the estimate
\[ \big|\overrightarrow{\mathcal{M}}^0_\lambda(A) - \overrightarrow{\mathcal{M}}^\varepsilon_\lambda(A)\big| 
\lesssim \varepsilon^\sigma R |A|, \]
apply it to the rotated set $A$, and finally integrate over the directions.

This time we introduce another function $g\colon\R^2\to\R$, 
\[ g(x,y) := \1_{[0,R]}(x) \1_{[0,R]}(y) \fint_{[0,R]} \1_A(x',y) \dd x', \]
which simply evaluates the averages of $\1_A$ over the horizontal sections of the containing square $[0,R]^2$.
It is supported on $[0,R]^2$ and takes values in $[0,1]$. Moreover,
\[ \overrightarrow{\mathcal{M}}^0_\lambda(A) = R \,\mathcal{N}^0_\lambda(\1_A,\1_A,g),\quad 
\overrightarrow{\mathcal{M}}^\varepsilon_\lambda(A) = R \,\mathcal{N}^\varepsilon_\lambda(\1_A,\1_A,g). \]
Thus, Estimate \eqref{eq:more_general} from the proof of Lemma~\ref{lm:uniform} applies and gives the desired bound.
\end{proof}

%%%%%

\subsection{Induction on scales}

\begin{proof}[Proof of Theorem~\ref{thm:triangles}]
Let $\delta$ denote the density of $A$ inside $[0,R]^2$, just as in \eqref{eq:deltaisdensity}.
After scaling the plane by $2^{-1/2}$, we can assume that $A$ contains no triple of points spanning a triangle of area $1/2$, which clearly implies 
\[ \mathcal{M}^0_\lambda(A)=0 \quad\text{for every }\lambda>0. \]
We apply Lemmas~\ref{lm:structured2}, \ref{lm:L2error2}, and \ref{lm:uniform2}, with $\varepsilon$ being a small multiple of $\delta^{2/\sigma}$. 
Integration of the square of
\[ \mathcal{M}^1_\lambda(A) - \mathcal{M}^\varepsilon_\lambda(A) \geq \frac{1}{2} \mathcal{M}^1_\lambda(A) \gtrsim R^3 \delta^3 \]
over $[1/R,R]$ with respect to $\dd\lambda/\lambda$, just as in the proof of Theorem~\ref{thm:main}, gives us 
\begin{equation}\label{eq:refinedenergy}
R^6 \delta^6 \log R \lesssim \mathcal{E}(A)^2 \Bigl(1+\log\frac{1}{\delta}\Bigr). 
\end{equation}
If we simply applied the Hardy--Littlewood--Sobolev inequality, 
\[ \mathcal{E}(A) \lesssim \|\1_A\|_{\textup{L}^{4/3}} \|\1_A\|_{\textup{L}^{4/3}} = |A|^{3/2} = R^3 \delta^{3/2}, \]
then \eqref{eq:refinedenergy} would yield
\[ \Bigl(\frac{1}{\delta}\Bigr)^{3} \Bigl(1+\log\frac{1}{\delta}\Bigr) \gtrsim \log R \]
and we would get
\begin{equation}\label{eq:mainestimate2_weak}
M_{\Delta}(R) \lesssim R^2 \Bigl(\frac{\log\log R}{\log R}\Bigr)^{1/3}.
\end{equation}
This is already an improvement over \eqref{eq:mainestimate}, but it is still weaker than \eqref{eq:mainestimate2}.

The idea is to rather prove by induction on scales $R\geq R_0$ that 
\begin{equation}\label{eq:induction_R}
|A| \leq C R^2 \Bigl(\frac{\log\log R}{\log R}\Bigr)^{1/2} 
\end{equation}
whenever $A\subseteq[0,R]^2$ contains no vertex set of a triangle of area $1/2$. Here $R_0\geq5$ and $C>0$ are sufficiently large absolute constants to be chosen later, while the induction means that we assume the estimate
\begin{equation}\label{eq:induction_r}
|A'| \leq C R'^2 \Bigl(\frac{\log\log R'}{\log R'}\Bigr)^{1/2}
\end{equation}
for every $R_0\leq R'\leq R/2$ and every measurable $A'$ with no triangles of area $1/2$ contained in an axes-aligned square of side $R'$. 
For every $z\in A$ write
\begin{align*} 
\int_{\R^2} \frac{\1_A(z') \dd z'}{|z-z'|} 
& = \int_{\R^2} \Bigl( \int_{|z-z'|}^{2R} \frac{\dd r}{r^2} + \frac{1}{2R} \Bigr) \1_A(z') \dd z'  \\
& = \int_{\R^2} \int_0^{2R} \frac{\1_A(z') \1_{\textup{D}(z,r)}(z')}{r^2} \dd r \dd z' + \frac{1}{2R} \int_{\R^2} \1_A(z') \dd z' \\
& = \int_0^{2R} \frac{|A\cap \textup{D}(z,r)|}{r^2} \dd r + \frac{|A|}{2R}.
\end{align*}
Applying the induction hypothesis \eqref{eq:induction_r} to $R'=2r$ and $A'=A\cap \textup{D}(z,r)$ for $R_0\leq r\leq R/4$ and estimating crudely for other values of $r\leq 2R$, we get
\begin{align*} 
\int_{\R^2} \frac{\1_A(z') \dd z'}{|z-z'|} 
& = \int_0^{R_0} \frac{|A\cap \textup{D}(z,r)|}{r^2} \dd r 
+ \int_{R_0}^{R/4} \frac{|A\cap \textup{D}(z,r)|}{r^2} \dd r
+ \int_{R/4}^{2R} \frac{|A\cap \textup{D}(z,r)|}{r^2} \dd r
+ \frac{|A|}{2R} \\
& \leq \pi R_0 + C \int_{5}^{R/4} \frac{(\log\log (2r))^{1/2}}{(\log (2r))^{1/2}} \dd r + 5R\delta,
\end{align*}
which is 
\[ \leq 5 R \biggl(\delta + C\Bigl(\frac{\log\log R}{\log R}\Bigr)^{1/2}\biggr) \]
as soon as $R_0$ and $C$ are sufficiently large. Integrating over $z\in A$ we get
\[ \mathcal E(A) \leq 5 R^3 \delta \biggl(\delta + C\Bigl(\frac{\log\log R}{\log R}\Bigr)^{1/2}\biggr). \]
Plugging this into \eqref{eq:refinedenergy} we obtain
\[ \delta^2 \leq C_0 \frac{(1+\log(1/\delta))^{1/2}}{(\log R)^{1/2}} \biggl(\delta + C\Bigl(\frac{\log\log R}{\log R}\Bigr)^{1/2}\biggr) \]
for an absolute constant $C_0>0$.
This gives
\[ \delta \leq C \Bigl(\frac{\log\log R}{\log R}\Bigr)^{1/2} \]
as soon as $C$ is sufficiently large in terms of $C_0$, which completes the proof of \eqref{eq:induction_R}.
\end{proof}

%%%%%%%%%%%%%%%%%%%%%%%%%%%%%%%%%%%%%%%%%%%%%%%%

\appendix

\section{Proof of the hyperbolic trilinear smoothing}
\label{sec:smoothing_proof}

The main part of the proof of Lemma~\ref{lm:smoothing} can be derived from the existing literature.
A rather general smoothing theorem for fiberwise bilinear multipliers was sketched by Hsu and Lin \cite{HL24}. However, we will prefer to use a less general formulation of this result from Lin's dissertation \cite{LinDiss}, which contains a very detailed proof in the form of a blueprint for a future Lean formalization.
Alternative proofs of Lemma~\ref{lm:smoothing} could be worked out by closely following the proofs of the trilinear smoothing inequalities by Christ, Durcik, and Roos \cite{CDR21} and Gaitan and Lie \cite{GaitanLie24}, on which the proof in \cite{LinDiss} is based.

Set 
\[ T(f_1,f_2)(x,y) := \int_{\R} f_1(x+t,y) f_2\Bigl(x,y+\frac{1}{t}\Bigr) \zeta(t) \dd t, \]
so that the desired estimate reads
\begin{equation}\label{eq:ts_desired_est}
\| T(f_1,f_2)\|_{\textup{L}^1} \lesssim \|f_1\|_{\textup{H}^{-\sigma,0}} \|f_2\|_{\textup{H}^{0,-\sigma}} 
\end{equation}
for an appropriately small $\sigma>0$ and $f_1,f_2$ supported in $[-10,10]^2$.

The fiberwise multiplier formulation of $T$ is
\begin{equation*}
T(f_1,f_2)(x,y) = \int_{\R^2} (\Fone f_1)(\xi,y) (\Ftwo f_2)(x,\eta) m(\xi,\eta)e^{2\pi i(x\xi+y\eta)} \dd\xi \dd\eta,
\end{equation*}
where
\begin{equation*}
m(\xi,\eta) := \int_{\R}e^{2\pi i(t\xi+\eta/t)} \zeta(t) \dd t .  
\end{equation*}

We localize the multiplier in frequency.

Let $\varphi\in C^{\infty}(\R)$ be a function such that $\1_{[-1,1]}\leq \varphi\leq \1_{[-2,2]}$. Let $\widetilde{\varphi}_0:=\varphi$ and for $j\ge 1$ let $\widetilde{\varphi}_j(u)=\varphi(2^{-j}u)-\varphi(2^{-j+1}u)$. 
Let $S(u):=\sum_{j\ge 0} \widetilde{\varphi}^2_j(u)$. At each point $u\in \R$, the sum defining $S(u)$ has at most two nonzero elements, so $S(u)\ge \frac12$. Therefore, if we define
\[ \psi_j(u):=\frac{\widetilde{\varphi}_j(u)}{S(u)^{1/2}}, \]
then $\psi_j\in C^{\infty}_c(\R)$ for every $j\ge 0$,
\[\operatorname{supp}\psi_0 \subseteq [-2,2], \quad  \operatorname{supp} \psi_j\subseteq 2^j([-2, -1/2]\cup [1/2,2]), \quad j\ge 1, \]
and
\[\sum_{j\ge 0}\psi_j^2\equiv1.\]
For $j,k\in \N_0$, let
\[m_{j,k}(\xi,\eta):=\psi_j^2(\xi)\psi_k^2(\eta) \int_{\R} e^{2\pi i(\xi t+\eta/t)}\zeta(t)\dd t\]
and define
\[ T_{j,k}(f_1,f_2)(x,y) :=\int_{\R^2}(\Fone f_1)(\xi,y)(\Ftwo f_2)(x,\eta)
m_{j,k}(\xi,\eta)e^{2\pi i(x\xi+y\eta)}\dd\xi\dd\eta . \]

The required Estimate \eqref{eq:ts_desired_est} will follow from the following two lemmas.

\begin{lemma}[Non-stationary part]
\label{lm:highlow}
Let $j,k\in \N_0$ be integers such that $|j-k|> 10$. Then the estimate
\begin{equation*}
\|T_{j,k}(f_1,f_2)\|_{\textup{L}^1} \lesssim_N 2^{-N\max\{j,k\}}\|f_1\|_{\textup{L}^2}\|f_2\|_{\textup{L}^2}
\end{equation*}
holds for every $N>0$.
\end{lemma}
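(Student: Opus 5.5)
The plan is to reduce the $\textup{L}^1$ bound to an $\textup{L}^1$ bound on the kernel of $T_{j,k}$, and then to get rapid decay of that kernel from a non-stationary phase estimate on $m_{j,k}$.

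\emph{Step 1: kernel form.} Let $K_{j,k}:=\check{m}_{j,k}$ be the inverse Fourier transform of $m_{j,k}$ on $\R^2$, with the sign convention chosen so that unwinding $\Fone$ and $\Ftwo$ gives
\[ T_{j,k}(f_1,f_2)(x,y)=\int_{\R^2} f_1(x+s,y)\, f_2(x,y+r)\, K_{j,k}(s,r)\dd s\dd r . \]
Fix $(s,r)$. By the Cauchy--Schwarz inequality in $(x,y)$ and translation invariance of the $\textup{L}^2$ norm,
\[ \int_{\R^2}|f_1(x+s,y)|\,|f_2(x,y+r)|\dd x\dd y\leq\|f_1\|_{\textup{L}^2}\|f_2\|_{\textup{L}^2}. \]
Integrating in $(s,r)$ then gives
\[ \|T_{j,k}(f_1,f_2)\|_{\textup{L}^1}\leq\|K_{j,k}\|_{\textup{L}^1}\|f_1\|_{\textup{L}^2}\|f_2\|_{\textup{L}^2}. \]
So it suffices to show $\|K_{j,k}\|_{\textup{L}^1}\lesssim_N 2^{-N\max\{j,k\}}$.

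\emph{Step 2: reduction to multiplier bounds.} Use the elementary bound
\[ \|K\|_{\textup{L}^1}\lesssim\bigl\|(1+s^2)(1+r^2)K\bigr\|_{\textup{L}^\infty}\lesssim\sum_{0\le a,b\le 2}\|\partial_\xi^a\partial_\eta^b m_{j,k}\|_{\textup{L}^1}. \]
The multiplier $m_{j,k}$ is supported in a set of area $\lesssim 2^{j+k}\leq 2^{2\max\{j,k\}}$. Hence it is enough to prove the pointwise bounds
\[ |\partial_\xi^a\partial_\eta^b m_{j,k}|\lesssim_N 2^{-N\max\{j,k\}},\qquad a,b\le 2, \]
since the area factor is then absorbed by increasing $N$.

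\emph{Step 3: derivatives of $m_{j,k}$.} By the Leibniz rule, each derivative either falls on $\psi_j^2(\xi)\psi_k^2(\eta)$, whose derivatives are bounded uniformly in $j,k$ (they are $O(2^{-j})$, $O(2^{-k})$), or on the oscillatory integral. In the latter case it produces
\[ \int_{\R} e^{2\pi i\Phi(t)}\,(2\pi i t)^{a'}(2\pi i/t)^{b'}\zeta(t)\dd t,\qquad \Phi(t):=\xi t+\eta/t . \]
This is again an oscillatory integral with a smooth compactly supported amplitude on $[1/2,2]$, with all derivatives bounded.

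\emph{Step 4: non-stationary phase (the main point).} On $\supp\zeta\subseteq[1/2,2]$ we have $\Phi'(t)=\xi-\eta/t^2$, and $\eta/t^2$ lies between $\eta/4$ and $4\eta$. Suppose first $j>k+10$. Then $|\xi|\ge 2^{j-1}$, while $|\eta|\le 2^{k+1}\le 2^{j-9}$ (this includes the case $k=0$, where $|\eta|\le2$). Therefore
\[ |\Phi'(t)|\ge |\xi|-4|\eta|\gtrsim 2^{j}. \]
The case $k>j+10$ is symmetric: $|\eta/t^2|\ge|\eta|/4\ge 2^{k-3}$ dominates $|\xi|\le 2^{j+1}$. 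So in both cases $|\Phi'|\gtrsim 2^{\max\{j,k\}}$ on the support. Moreover $\Phi''(t)=2\eta/t^3$ and the higher derivatives of $\Phi$ are $O(|\eta|)\lesssim|\Phi'|$ there. Integrating by parts $N$ times with the operator $(2\pi i\Phi')^{-1}\frac{d}{dt}$ (there are no boundary terms) therefore gives the bound $O_N(2^{-N\max\{j,k\}})$. This establishes Step 3, and combining Steps 1–3 proves the lemma.

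The only delicate point is Step 4: one must check that the lower bound $|\Phi'|\gtrsim 2^{\max\{j,k\}}$ holds uniformly, including the edge cases $j=0$ or $k=0$, and that the derivatives of $1/\Phi'$ stay controlled. Both follow from the separation $|j-k|>10$ together with $t\in[1/2,2]$.
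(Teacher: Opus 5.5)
Your proof is correct and follows the same core argument as the paper. The shared steps are the non-stationary phase lower bound $|\Phi'|\gtrsim 2^{\max\{j,k\}}$ on $[1/2,2]$, repeated integration by parts to get rapidly decaying derivative bounds for the multiplier, and Cauchy--Schwarz to obtain the $\textup{L}^2\times\textup{L}^2\to\textup{L}^1$ estimate. The only difference is minor: the paper expands $m\psi_j\psi_k$ in a Fourier series on a box of side $2^{\max\{j,k\}+2}$, turning $T_{j,k}$ into a rapidly convergent sum of products of translated Littlewood--Paley pieces, whereas you use the continuous version of the same idea and bound the $\textup{L}^1$ norm of the kernel of $T_{j,k}$ (the Fourier transform of $m_{j,k}$) directly, which is slightly more self-contained.
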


\begin{proof}
The proof follows the proof of a single scale version of the high--low Coifman--Meyer paraproduct estimate.

Let $\Phi_{\xi,\eta}(t)=\xi t+\eta/t$. When $|j-k|> 10$, it follows that for $t\in \operatorname{supp}\zeta \subset [\frac{1}{2},2]$, $\xi \in \operatorname{supp}\psi_j$ and $\eta\in \operatorname{supp}\psi_k$ the following estimate holds
\begin{equation}
    \label{eq:phi_nonstat_est}
     |\Phi'_{\xi,\eta}(t)| = \Big|\xi - \frac{\eta}{t^2}\Big| \gtrsim 2^{\max\{j,k\}}.
\end{equation}
Repeated integration by parts with
\[ L_{\xi,\eta}:=\frac{1}{2\pi i\,\Phi'_{\xi,\eta}(t)}\frac{\dd}{\dd t} \]
gives, for every fixed finite range of $\alpha,\beta$ and every $N>0$,
\begin{equation}\label{eq:ts_nonstat_symb}
\big|\partial_\xi^\alpha\partial_\eta^\beta m(\xi,\eta)\big| \lesssim_{N,\alpha,\beta} 2^{-N\max\{j,k\}}.
\end{equation}
Indeed, differentiating the integral in $\xi$ and $\eta$ only inserts powers of $t$ and $t^{-1}$ with $\zeta(t)$ and the integration by parts gives the required decay.
Since $(\xi,\eta)\mapsto m(\xi,\eta)\psi_j(\xi)\psi_k(\eta)$ is a smooth function supported in a $[-2^{\max\{j,k\}+1}, 2^{\max\{j,k\}+1}]^2$ square (which has side length $2^{\max\{j,k\}+2}$), using classical results for Fourier series, it follows that
\[m(\xi,\eta)\psi_j(\xi)\psi_k(\eta)=\sum_{n_1,n_2\in \Z}c_{n_1,n_2}e^{2\pi i(n_1\xi + n_2\eta)/2^{\max\{j,k\}+2}},\]
where the coefficients, because of Estimate \eqref{eq:ts_nonstat_symb}, satisfy
\[|c_{n_1,n_2}|\lesssim_{\varphi} 2^{-N\max\{j,k\}}(1+|n_1|+|n_2|)^{-10}.\]
Therefore, using Cauchy--Schwarz and Young convolution inequalities, we have
{\allowdisplaybreaks\begin{align*}
&\|T_{j,k}(f_1,f_2)\|_{\textup{L}^1}\\
&\qquad\leq \sum_{n_1,n_2\in \Z} |c_{n_1,n_2}| \bigl\| f_1*_1\psi_j^{\vee}(x + n_1/2^{\max\{j,k\}+2},y) f_2*_2\psi_k^{\vee}(x,y + n_2/2^{\max\{j,k\}+2})\bigr\|_{\textup{L}^1}\\
&\qquad\lesssim_{\varphi} 2^{-N\max\{j,k\}}\sum_{n_1,n_2\in \Z}(1+|n_1|+|n_2|)^{-10}\|f_1\|_{\textup{L}^2}\|f_2\|_{\textup{L}^2}\\
&\qquad\lesssim 2^{-N\max\{j,k\}}\|f_1\|_{\textup{L}^2}\|f_2\|_{\textup{L}^2}. \qedhere
\end{align*}}
\end{proof}

\begin{lemma}[Stationary part]
\label{lm:highhigh}
There exists a constant $c>0$, depending only on $\zeta$ and $\psi$ in the definition of $m_{j,k}$, such that the following holds for every $j,k\in \N_0$ such that $|j-k|\leq 10$:
\begin{equation*}
\|T_{j,k}(f_1,f_2)\|_{\textup{L}^1([0,1]^2)}
\lesssim_{\zeta,\psi} 2^{-c\max\{j,k\}}\|f_1\|_{\textup{L}^2}\|f_2\|_{\textup{L}^2}.
\end{equation*}
\end{lemma}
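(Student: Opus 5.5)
The plan is to reduce Lemma~\ref{lm:highhigh} to the fiberwise bilinear multiplier smoothing theorem from Lin's dissertation \cite{LinDiss}, in the Hsu--Lin formulation \cite{HL24}. The main work is checking that the localized symbols $m_{j,k}$ with $|j-k|\leq 10$ satisfy its hypotheses uniformly in $j$, after rescaling frequencies by $2^{j}$.

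First I would record the trivial bound. Since $|\psi_j|,|\psi_k|\leq 1$ and the multiplier is bounded, we have $\|T_{j,k}(f_1,f_2)\|_{\textup{L}^1([0,1]^2)}\lesssim \|f_1\|_{\textup{L}^2}\|f_2\|_{\textup{L}^2}$, by Cauchy--Schwarz and Plancherel in each fiber. So only a power gain $2^{-cj}$ has to be extracted.

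Next I would split by the signs of $\xi$ and $\eta$.
\begin{itemize}
\item If $\xi\eta<0$, then $|\Phi'_{\xi,\eta}(t)|=|\xi|+|\eta|/t^2\gtrsim 2^{j}$. The argument of Lemma~\ref{lm:highlow} applies verbatim and gives $O_N(2^{-Nj})$.
\item If $\xi\eta>0$, the phase $\Phi_{\xi,\eta}(t)=\xi t+\eta/t$ has a single nondegenerate critical point $t_0=\sqrt{\eta/\xi}$. Here $t_0\sim 1$, since $|j-k|\leq 10$, although $t_0$ may fall outside $\supp\zeta$.
\end{itemize}
In the second case, stationary phase gives
\[ m_{j,k}(\xi,\eta)=2^{-j/2}e^{\pm 4\pi i\sqrt{\xi\eta}}\,a_{j,k}(\xi,\eta)+r_{j,k}(\xi,\eta). \]
Here $a_{j,k}$ is a symbol adapted to the $2^{j}\times 2^{j}$ box, with $|\partial^\alpha_\xi\partial^\beta_\eta a_{j,k}|\lesssim 2^{-j(\alpha+\beta)}$, and $r_{j,k}$ is a rapidly decaying remainder that is handled as in Lemma~\ref{lm:highlow}. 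The factor $2^{-j/2}$ alone does not give the gain, because the normalization of $T_{j,k}$ loses it again. The gain must come from the \emph{curvature} of the phase $\Psi(\xi,\eta)=2\sqrt{\xi\eta}$, which satisfies
\[ \partial_\xi\partial_\eta\Psi=\frac{1}{2\sqrt{\xi\eta}}\sim 2^{-j}\neq 0 . \]
After rescaling $(\xi,\eta)\mapsto 2^{j}(\xi,\eta)$, this is a uniform nondegeneracy condition of unit size. This is the analogue of the non-vanishing mixed-derivative condition in \cite{CDR21} and \cite{GaitanLie24}, and it is the hypothesis I expect \cite{LinDiss} to require. The hyperbolic exponent $a=-1$ is harmless here because only $t\in[1/2,2]$ occurs, so the map $t\mapsto(t,1/t)$ has nonvanishing curvature on that range.

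If the black-box verification fails, I would reproduce the argument of \cite{CDR21} directly. I would dualize against $f_0\in\textup{L}^\infty([0,1]^2)$ and apply Cauchy--Schwarz to eliminate $f_0$ and $f_2$, which yields a quadrilinear expression in $f_1$ with two copies of the modulation $e^{2\pi i\cdot 2\sqrt{\xi\eta}}$. I would then use a sublevel-set or van der Corput estimate in the fiber variable to show the following: if the form exceeds $2^{-cj}\|f_1\|_{\textup{L}^2}\|f_2\|_{\textup{L}^2}$, then $f_1$ must correlate, on many fibers, with functions whose $\xi$-frequency is essentially constant on intervals of length $2^{(1-c')j}$. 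That is incompatible with the curvature of $\Psi$ on the annulus $|\xi|\sim 2^{j}$.

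This last step is the main obstacle: getting a quantitative gain uniform in $j$, which in \cite{CDR21} goes through a delicate structural decomposition. I would try first to simply verify the hypotheses of the smoothing theorem in \cite{LinDiss} for the rescaled symbol. These hypotheses are smoothness, compact support in an annulus, a stationary-phase form with a nondegenerate mixed Hessian, and the localization to $[0,1]^2$, which matches the $\textup{L}^1([0,1]^2)$ norm in the statement.
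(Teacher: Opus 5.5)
Your skeleton matches the paper's: a trivial bound for bounded $j,k$, then a sign split. The opposite-sign pieces go through the non-stationary argument of Lemma~\ref{lm:highlow}, and the same-sign pieces, where $t_0=\sqrt{\eta/\xi}\sim 1$, are handed to \cite[Thm.~3.1.12]{LinDiss}. The gap is in the step that carries the whole proof: checking that this theorem applies. You misidentify its hypothesis and never carry out the check. The theorem does not assume a nonvanishing mixed Hessian of $\Psi$. It assumes $\gamma''\neq 0$ on an interval containing the stationary points, together with
\[ \theta\theta''-\theta'(1+\theta')\neq 0,\qquad \theta:=\frac{\gamma'}{\gamma''}. \]
For $\gamma(t)=-1/t$ this gives $\theta=-t/2$ and $\theta\theta''-\theta'(1+\theta')=1/4$, which is the computation the paper does. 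Your condition cannot stand in for this one. For a general phase $\xi t+\eta\gamma(t)$, the envelope identity $\partial_\xi\Psi=t_0$ gives $\partial_\xi\partial_\eta\Psi=-\theta(t_0)/\eta$. This is automatically nonzero at any stationary point with $\xi\neq0$, since there $\gamma'(t_0)=-\xi/\eta\neq0$. In particular it is nonzero for $\gamma(t)=e^t$, where $\theta\equiv1$ and Lin's quantity vanishes. Moreover, a nonzero mixed Hessian is what stops you from keeping the stationary-phase factor $2^{-j/2}$ for free: if $\Psi$ split as $A(\xi)+B(\eta)$, the gain would be immediate. So it cannot by itself be the source of the gain, contrary to your heuristic.

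Second, even with the correct hypothesis the theorem does not apply as stated. It is formulated only for $j=k$, with $\psi_j$ supported in narrow intervals $2^j[1-\delta,1+\delta]$ and $\gamma$ given on $[1/2,3/2]$. This restriction keeps the stationary point where the stationary-phase decomposition of the multiplier is valid. Your symbols live on annuli and allow $|j-k|\leq 10$. You note that $t_0$ may leave $\supp\zeta$, but you never resolve this mismatch. The paper's fix uses that $-1/t$ is smooth with $\inf_I|\gamma''|>0$ on $I=[2^{-6},2^{6}]$. This interval contains $\sqrt{\eta/\xi}$ for all same-sign $\xi\in\supp\psi_j$, $\eta\in\supp\psi_k$ with $|j-k|\leq10$, so Lin's proof can be run on $I$. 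Your fallback through \cite{CDR21} only names the hard step, so it does not close the gap.

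A minor point: the trivial bound does not follow from boundedness of $m_{j,k}$ plus Plancherel, which would lose a power of $2^{j}$. Instead, write $T_{j,k}(f_1,f_2)$ in physical space as a $t$-average of products of frequency-localized translates, and apply Cauchy--Schwarz for each fixed $t$. Also, do the sign split only for $\max\{j,k\}$ large, since $\psi_0\1_{(0,\infty)}$ is not smooth.
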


\begin{proof}
We claim that this is essentially what Theorem \cite[Ch.~3, Thm.~3.1.12]{LinDiss} claims for $\gamma(t)=-1/t$, but a few comments are due.
First, we observe that
\[ \gamma'(t)=\frac{1}{t^2},\quad \gamma''(t)=-\frac{2}{t^3},\quad
\theta(t):=\frac{\gamma'(t)}{\gamma''(t)}=-\frac{t}{2}, \]
and hence
\[ \theta'(t)=-\frac12, \quad
\theta(t)\theta''(t)-\theta'(t)(1+\theta'(t))=\frac14, \]
so the quantitative nondegeneracy hypotheses in \cite[Thm.~3.1.12]{LinDiss} hold. 

Theorem \cite[Thm.~3.1.12]{LinDiss} is stated only for the case $j=k$ with the functions $\psi_j$ having support in smaller intervals (not annuli) $2^j[1-\delta,1+\delta]$ for $\delta>0$ sufficiently small depending on $\gamma$.
The assumption that $\delta>0$ is small enough is imposed because in the statement of theorem \cite[Thm.~3.1.12]{LinDiss}, the function $\gamma$ is defined only on $[1/2,3/2]$ and they have to make sure that the stationary point of the phase $\Phi$ lies in that interval so that the decomposition of the multiplier in Proposition \cite[Prop.~3.1.17]{LinDiss}, which uses the method of stationary phase, holds true.

In our case, the function $\gamma=-\frac{1}{t}$ is smooth on $(0,\infty)$ and satisfies the assumption 
\[\inf_{t\in I}|\gamma''(t)|\neq 0\] for any compact interval $I\subset (0,\infty)$. The choice $I=[2^{-6},2^{6}]$ will satisfy the requirements.

If $j,k\leq 20$, the statement trivially follows from Cauchy--Schwarz and Young convolution inequalities as in the proof of Lemma~\ref{lm:highlow}.\footnote{We prove this case separately to avoid the technicality that for $j=0$ or $k=0$, the truncated function $\psi_0 \1_{(0,\infty)}$, which we use in the rest of the proof is no longer smooth.}

For $\max\{j,k\}\ge 20$, the operators associated with 
\[ m_{j,k}(\xi,\eta) \1_{(0,\infty)}(\xi) \1_{(-\infty,0)}(\eta) \quad\text{and}\quad  
m_{j,k}(\xi,\eta) \1_{(-\infty,0)}(\xi) \1_{(0,\infty)}(\eta) \] 
can be handled using the same method as in Lemma~\ref{lm:highlow}. Indeed, the phase satisfies \eqref{eq:phi_nonstat_est}, allowing us to perform integration by parts.

Finally, to handle operators associated with 
\[ m_{j,k}(\xi,\eta) \1_{(0,\infty)}(\xi) \1_{(0,\infty)}(\eta) \quad\text{and}\quad 
m_{j,k}(\xi,\eta) \1_{(-\infty,0)}(\xi) \1_{(-\infty,0)}(\eta), \]
we observe that since $|j-k|\leq 10$, the stationary point of the phase $\Phi$ satisfies $t_0=\sqrt{\eta/\xi}\in [2^{-6},2^6]$. Therefore, the proof of \cite[Theorem~3.1.12]{LinDiss} with the given choice of $I$ gives the desired bound. 
\end{proof}

\begin{proof}[Proof of Lemma~\ref{lm:smoothing}]
As we have already observed, it remains to prove estimate \eqref{eq:ts_desired_est}.

Let $\widetilde{\psi}_0\in C_c^{\infty}(\R)$ be a function that is equal to $1$ on the support of $\psi_0$, let $\widetilde{\psi}_1\in C^{\infty}(\R)$ be a function supported in $[-4,-1/4]\cup [1/4,4]$ that is equal to $1$ on $ \operatorname{supp}\psi_1\subset [-2,-1/2]\cup[1/2,2]$ and let $\widetilde{\psi}_j(t)=\widetilde{\psi}_1(2^{-j}t)$. 
For $j\in \N_0$, let $P_j^{(1)}$ be the operator defined with 
\[ \bigl(P_j^{(1)}f\bigr)(x,y) 
= \Fone^{-1} \bigl( (\widetilde{\psi}_j\otimes \1_{\R}) (\Fone f)\bigr)(x,y)=\int_{\R}\widetilde{\psi}_j(\xi)(\Fone f)(\xi,y) e^{2\pi i \xi x} \dd \xi \]
and similarly let $P_j^{(2)}$ be the operator defined with
\[ \bigl(P_j^{(2)}f\bigr)(x,y)
= \Ftwo^{-1} \bigl((\1_{\R}\otimes\widetilde{\psi}_j) (\Ftwo f)\bigr)(x,y)=\int_{\R}\widetilde{\psi}_j(\eta)(\Ftwo f)(x,\eta) e^{2\pi i \eta y} \dd \eta. \]
Then, because of the support assumption, we conclude that
\[T_{{j,k}}(f_1,f_2)=T_{{j,k}}(P_j^{(1)}f_1,P_k^{(2)}f_2).\]

Observe that for functions $f_1,f_2$ supported on $[-10,10]^2$, we have $\operatorname{supp}T(f_1,f_2)\subset [-20,20]^2$. Therefore, by partitioning $[-20,20]^2$ into $O(1)$ squares of sidelength $1$, it is sufficient to bound
$\|T(f_1,f_2)\|_{L^1(Q)}$ for a square $Q$ of sidelength $1$. Without loss of generality, we can assume that $Q=[0,1]^2$.

Using the multiplier decomposition, previous observations, Lemmas~\ref{lm:highlow} and \ref{lm:highhigh}, and the fact that $2\max\{j,k\}\ge  j+k$, there exists $c>0$ such that the following chain of inequalities is true
{\allowdisplaybreaks\begin{align*}
\|T(f_1,f_2)\|_{\textup{L}^1([0,1]^2)}
&\leq \sum_{j,k\ge 0} \|T_{{j,k}}(f_1,f_2)\|_{\textup{L}^1([0,1]^2)}\\ 
&= \sum_{j,k\ge 0} \|T_{{j,k}}(P_j^{(1)}f_1,P_k^{(2)}f_2)\|_{\textup{L}^1([0,1]^2)}\\
&\lesssim_{\zeta,\varphi} \sum_{j,k\ge 0}2^{-c(j+k)}\|P_j^{(1)}f_1\|_{\textup{L}^2}\|P_k^{(2)}f_2\|_{\textup{L}^2}\\
&=\Bigl(\sum_{j\ge 0}2^{-cj}\|P_j^{(1)}f_1\|_{\textup{L}^2}\Bigr) \Bigl(\sum_{k\ge 0}2^{-ck}\|P_k^{(2)}f_2\|_{\textup{L}^2}\Bigr).
\end{align*}}
Finally, using the Cauchy--Schwarz inequality and the Littlewood--Paley characterization of the Sobolev norm, we have
\[\sum_{j\ge 0}2^{-cj}\|P_j^{(1)}f_1\|_{\textup{L}^2}\leq \underbrace{\Bigl(\sum_{j\ge 0} 2^{-cj} \Bigr)^{1/2}}_{\lesssim 1} \Bigl(\sum_{j\ge 0}2^{-cj}\|P_j^{(1)}f_1\|_{\textup{L}^2}^2 \Bigr)^{1/2}\lesssim_c \|f_1\|_{\textup{H}^{-c/2,0}}.\]
Using the analogous inequality for $f_2$, we conclude the proof.
\end{proof}

%%%%%

\section{Graham's argument}
\label{sec:Grahams}

The following lemma is the most likely reconstruction of Graham's comment in \cite{Gra80} based on Szemer\'{e}di's theorem.

\begin{lemma}\label{lm:Grahams_lemma}
For every $\beta\in(0,1]$, there exist a positive integer $n$ and a number $T>0$ such that every set $B\subseteq \gridn^2$ with at least $\beta n^2$ elements contains three points spanning a triangle of area exactly $T$.
\end{lemma}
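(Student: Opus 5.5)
The plan is to use Szemer\'edi's theorem, in its density form, to produce long arithmetic progressions in many rows. Then a pigeonhole argument over finitely many choices forces two rows whose progressions combine into a triangle of one prescribed area. Fix $\beta$ and set $k:=\lceil 2/\beta\rceil$, or any integer depending only on $\beta$ that is large enough for the pigeonhole below. Let $N(\beta/2,k)$ be the Szemer\'edi threshold: every subset of $\{0,\dots,N-1\}$ of density at least $\beta/2$, with $N\ge N(\beta/2,k)$, contains a $k$-term arithmetic progression. We take $n\ge N(\beta/2,k)$, with further conditions on $n$ added below.

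\textbf{Step 1: rich rows.} Write $B_j:=\{i:(i,j)\in B\}$. Since $|B|\ge\beta n^2$, at least $\beta n/2$ rows $j$ satisfy $|B_j|\ge \beta n/2$. In each such row, Szemer\'edi's theorem gives a progression $a_j, a_j+d_j,\dots,a_j+(k-1)d_j$ with common difference $1\le d_j\le n/(k-1)$. The difference $d_j$ takes at most $n$ values, which is too many to pigeonhole directly. We therefore do not fix the difference in advance.

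\textbf{Step 2: area bookkeeping.} Take two rich rows $j<j'$ with $h:=j'-j$. A triangle formed by two points of row $j$ at horizontal distance $pd_j$, $1\le p\le k-1$, and any point of row $j'$ has area $\tfrac12 p d_j h$. Likewise, two points of row $j'$ at distance $qd_{j'}$ together with any point of row $j$ give area $\tfrac12 q d_{j'}h$. So it suffices to find two rich rows and integers $p,q\le k-1$ with $pd_j=qd_{j'}$. This is not yet the statement, however, because $T$ must be fixed before $B$ is chosen. The fix is to reverse the quantifiers by compactness. Bound the differences by a constant $D=D(\beta)$ by working in a grid of size $n_0=N(\beta/2,k)$: partition $\gridn^2$ into $n_0\times n_0$ blocks, and use averaging to find a block of density at least $\beta$. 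Inside that block all differences satisfy $d\le n_0$, all heights satisfy $h\le n_0$, and hence every candidate area is $\tfrac12 m$ with $m\le n_0^3$. Among these finitely many values we still need one that is forced for every dense $B$.

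\textbf{Step 3 (main obstacle): producing a universal $T$.} I expect the choice of a single $T$ working for all $B$ to be the crux. I would take $T:=\tfrac12\,L\cdot n_0!$ with $L:=\operatorname{lcm}(1,\dots,n_0)$, and choose $n$ large, as a multiple of a large grid, so that the geometry can be rescaled. In the dense block, pick a rich row with progression difference $d$ and a second rich row at height $h$, where $1\le h\le n_0$ is guaranteed by the number of rich rows. The product $dh$ divides $n_0!^{\,2}$, but we need exactly $2T$, and a progression only provides multiples $pd$ with $p<k$. To reach large multiples, I would instead apply Szemer\'edi's theorem to the set of row indices: the rich rows form a dense set, so they contain a long progression of heights $h,2h,\dots$. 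This allows both the horizontal base $pd$ and the height $rh$ to range over multiples. Then $pd\cdot rh=2T$ becomes solvable once $k$ exceeds $2T/(dh)$, which is bounded in terms of $n_0$ only. This creates a circular dependence between $k$ and $n_0$. I would resolve it with a multidimensional Szemer\'edi theorem (Furstenberg--Katznelson): find a homothetic copy $\{(a+sx,b+sy):0\le x,y<K\}$ of a fixed $K\times K$ grid inside $B$, where $K$ depends only on $\beta$ and the scale $s$ is bounded by some $S(\beta)$ once $n$ is large. The grid $K\times K$ contains triangles of every area $\tfrac12 m$ for $m\le (K-1)^2$. Scaling multiplies areas by $s^2$. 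Taking $T:=\tfrac12 (S!)^2$ and $K>S!$, set $m=(S!/s)^2\le (K-1)^2$. The scaled copy then contains a triangle of area $\tfrac12 s^2 m=T$, which finishes the argument. This last route appears cleanest, and I would commit to it, using the multidimensional Szemer\'edi theorem in place of Steps 1--2.
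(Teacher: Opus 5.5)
Your final route has a genuine gap. The multidimensional Szemer\'edi theorem does not remove the circular dependence you identified; it only moves it. The bound $S$ on the scale of the homothetic copy comes from localizing to blocks whose side is the Furstenberg--Katznelson threshold for the $K\times K$ grid, so $S$ is a function of $K$ (and $\beta$). But you then require $K>S!$, since already for $s=1$ you need $m=(S!)^2\le (K-1)^2$. No choice of constants achieves this. Let $A\subseteq\gridn$ be the set of integers divisible by no prime $q\in(K/2,K]$. Its complement has proportion at most $\sum_{K/2<q\le K}1/q+K/n=O(1/\log K)+K/n$, so $B=A\times\gridn$ has density at least $\beta$ for any fixed $\beta<1$ once $K$ and $n$ are large. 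If such a $q$ does not divide $s$, the abscissae $a+sx$, $0\le x<K$, run through all residues modulo $q$, so one of them is divisible by $q$ and is not in $A$. Hence every homothetic copy of the $K\times K$ grid inside $B$ has scale divisible by $\prod_{K/2<q\le K}q$, which is exponentially large in $K$. So $S$ must be far larger than $K$, and $K>S!$ is impossible. The underlying problem is that a homothetic copy has the same uncontrolled scale $s$ in both directions. The area is multiplied by $s^2$, and nothing independent of $s$ is left to compensate for it.

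The missing idea, which is how the paper argues, is to use Szemer\'edi's theorem in one direction only, and a plain pigeonhole argument with a \emph{freely prescribed} difference in the other. Take $r\ge 4/\beta$ and let $N$ be the Szemer\'edi threshold for density $\beta/2$ and progressions of length $r!+1$. Call a row populated if it has at least $(\beta/2)n$ points of $B$. Some strip of height $N$ contains at least $(\beta/2)N$ populated rows, and inside it you find populated rows $y_0,y_0+l,\dots,y_0+r!\,l$ with uncontrolled $l\le N$. Only now choose the horizontal difference $k:=N!/l$. Cover row $y_0$ by progressions of length $r+1$ and difference $k$, which is possible when $n\ge (r+1)N!$. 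Roughly $n/(r+1)<(\beta/2)n$ of them suffice, so two points of $B$ in row $y_0$ lie in the same progression, at distance $(j-i)k$ with $1\le j-i\le r$. Take a third point in the populated row $y_0+\frac{r!}{j-i}\,l$. The area is $\frac12 (j-i)k\cdot\frac{r!}{j-i}\,l=\frac12 r!\,N!$, independent of $B$. The uncontrolled $l$ is absorbed by the tailored $k$, and the uncontrolled multiplier $j-i$ by the length $r!+1$ of the vertical progression. Since the pigeonhole step has no threshold depending on $k$, there is no circularity. Your Steps 1--2 were close to this, but applying Szemer\'edi within rows, with uncontrolled differences $d_j$, is exactly what reintroduced the dependence.
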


\begin{proof}
Take a positive integer $r\geq 4/\beta$. By Szemer\'{e}di's theorem there exists a positive integer $N$ such that every subset of $\{0,1,\dots,N-1\}$ of size at least $(\beta/2) N$ contains a non-degenerate arithmetic progression of length $r!+1$.
We choose the integer $T$ mentioned in the lemma formulation to be $T:=r!N!/2$. Finally, let $n$ be a multiple of $N$ so large that $n\geq(r+1)N!$.

Let $B\subseteq\gridn^2$ satisfy $|B|\geq \beta n^2$. A row 
\[ \big\{(x,y) : x\in\gridn\big\} \] 
indexed by $y\in \gridn$ is said to be \emph{populated} if at least $(\beta/2)n$ of its $n$ elements belong to $B$. There are at least $(\beta/2)n$ populated rows, since otherwise
\[ |B| \leq \frac{\beta}{2}n^2 + \Bigl(n-\frac{\beta}{2}n\Bigr)\frac{\beta}{2}n < \beta n^2, \]
which would be in a contradiction with our assumption on the size of $B$.
Partition all rows from the $n\times n$ grid into $n/N$ strips of height $N$. At least one strip contains at least $(\beta/2)N$ populated rows. By the choice of $N$ according to Szemer\'{e}di's theorem, there are $r!+1$ equally spaced populated rows inside that strip; let them be determined with their vertical coordinates
\begin{equation}\label{eq:list_of_ys}
y_0,\ y_0+l,\ y_0+2l,\ \ldots,\ y_0+r!l. 
\end{equation}
We define $k:=N!/l\in \N$.

Now consider the lowest populated row, the one indexed by $y_0$: the set $B$ occupies at least $(\beta/2)n$ of its points.
Consider all arithmetic progressions of length $r+1$, 
\[ x_0,\ x_0+k,\ x_0+2k,\dots,\ x_0+rk, \]
that fully belong to $\gridn$.
We claim that at least one of these progressions contains at least two numbers from $\{x : (x,y_0)\in B\}$.
If that was not the case, then we would fully cover (with possible overlaps) $\gridn$ by $\lceil n/(r+1)\rceil$ such arithmetic progressions to conclude that the row indexed by $y_0$ contains at most
\[ \Big\lceil \frac{n}{r+1}\Big\rceil < \frac{2n}{r} \leq \frac{\beta}{2}n \]
points from $B$, which contradicts the choice of $y_0$.
Choose such a progression and such two numbers $x_0+ik$, $x_0+jk$ for some integers $x_0$ and $0\le i<j\le r$. Thus, 
\begin{equation}\label{eq:pts2}
(x_0+ik,y_0),\ (x_0+jk,y_0) \in B.
\end{equation}
The number
\[ y_1=y_0+\frac{r!}{j-i}l \]
is one of the numbers \eqref{eq:list_of_ys}, so we can find a point
\begin{equation}\label{eq:pts1}
(x_1,y_1)\in B.
\end{equation}
The three points \eqref{eq:pts2}, \eqref{eq:pts1} determine a triangle of area
\[ \frac{1}{2} (j-i)k \frac{r!}{j-i}l = \frac{1}{2} r! N! = T. \qedhere \]
\end{proof}

Note that we could not have been very picky about the area value $T$ in the previous lemma. Namely, it had to satisfy certain divisibility properties: if $2T$ had too few divisors, then it would not allow us to consider sufficiently many potential triangles with vertices on the integer lattice $\Z^2$. However, the actual values of $n$ and $T$ will not matter when we pass from the discrete setting to $\R^2$.

We briefly argue how Lemma~\ref{lm:Grahams_lemma} gives an alternative proof of \eqref{eq:Graham_weak}, as this transference is easy and standard.
Namely, fix $\delta\in(0,1]$. Apply Lemma~\ref{lm:Grahams_lemma} with $\beta=\delta/2$, which then gives $n\in\N$ and $T>0$ with the stated properties. We will show that, for every $R\geq 8n T^{-1/2} \delta^{-1}$, every subset $A\subseteq[0,R]^2$ of measure greater than $\delta R^2$ contains a triple of points that span a triangle of area $1$.
Namely, for every 
\[ (u,v)\in Q:= \Bigl[ 0,R-\frac{n-1}{T^{1/2}} \Bigr]^2 \] 
define
\[ B_{u,v} := \big\{ (k,l)\in\gridn^2 : (u,v) + T^{-1/2}(k,l) \in A \big\}. \]
Then
\[ \fint_{Q} |B_{u,v}| \dd u \dd v 
\geq \frac{1}{R^2} n^2 \Big| A \cap \Bigl[\frac{n}{T^{1/2}},R-\frac{n}{T^{1/2}}\Bigr]^2 \Big|
> \frac{\delta}{2}n^2, \]
so there exist $u,v$ such that $|B_{u,v}|>(\delta/2)n^2$. From Lemma~\ref{lm:Grahams_lemma} we know that $B_{u,v}$ contains vertices of a triangle with area $T$, whence $A$ contains vertices of a triangle with area $1$. Consequently, $M_{\Delta}(R)\leq \delta R^2$.

\begin{remark}
The latter proof gives quantitatively very weak upper bounds on $M_{\Delta}(R)$ in comparison with our earlier result.
Namely, by the best currently known bounds in Szemer\'{e}di's theorem \cite{LSS24}, already the number $N$ used in the proof grows like 
\[ N\geq\exp \Bigl( \exp \Bigl( (\log(2/\beta))^{C(\lceil4/\beta\rceil!+1)} \Bigr) \Bigr), \]
where the dependence of the constants $C(k)$ on the progression length $k$ is not even easy to track down explicitly.
\end{remark}

%%%%%%%%%%%%%%%%%%%%%%%%%%%%%%%%%%%%%%%%%%%%%%%%

\section*{Declaration of AI usage}
OpenAI's \emph{ChatGPT} 5.4 Pro was used to construct the example that gives the lower bound $\Omega(R\log R)$. \emph{ChatGPT} 5.5 Pro drafted an improvement over \eqref{eq:mainestimate2_weak}, which led us to \eqref{eq:mainestimate2}. It was also used to clarify Graham's cryptic remark and reconstruct its intended proof. Google's \emph{Gemini} 3.1 Pro was used to draw Figure~\ref{fig:exampleset}. The ideas, the proofs, and the actual writing of the manuscript are entirely the work of the authors, who assume full responsibility for the content.

%%%%%%%%%%%%%%%%%%%%%%%%%%%%%%%%%%%%%%%%%%%%%%%%

\section*{Acknowledgments and funding}
We are grateful to Michael Christ for several useful discussions, to Adrian Beker and Polona Durcik for pointing us to the relevant literature, and to Fred Lin for providing a copy of the doctoral dissertation \cite{LinDiss}.

This work was supported in part by the Croatian Science Foundation under the project HRZZ-IP-2022-10-5116 (\emph{FANAP}). The paper was also supported in part by the European Union -- NextGenerationEU through the National Recovery and Resilience Plan 2021--2026, via an institutional grant of the University of Zagreb Faculty of Science, IK IA 1.1.3, \emph{Impact4Math}.

%%%%%%%%%%%%%%%%%%%%%%%%%%%%%%%%%%%%%%%%%%%%%%%%

\bibliographystyle{plainurl}
\bibliography{right_area1_triangles}

\end{document}